\newtheorem{theorem}{Theorem}[section]
\newtheorem{lemma}[theorem]{Lemma}
\newtheorem{corollary}[theorem]{Corollary}
\theoremstyle{definition}
\numberwithin{equation}{section}
\begin{document}
\setcounter{page}{1}

\title[Jordan Derivation On Generalized Triangular Matrix Rings]{Jordan Derivation On Generalized Triangular Matrix Rings}

\author[P. Danchev]{P. Danchev$^{*}$}
\author[A. Fatehi]{A. Fatehi}
\author[M. Zahiri]{M. Zahiri}
\author[S. Zahiri]{S. Zahiri}

\address{Peter Danchev, Institute of Mathematics and Informatics, Bulgarian Academy of Sciences, 1113 Sofia, Bulgaria}
\email{\textcolor[rgb]{0.00,0.00,0.84}{pvdanchev@yahoo.com; danchev@math.bas.bg}}
\address{Ayda Fatehi, Department of Mathematics, Tarbiat Modares University, 14115-111 Tehran Jalal AleAhmad Nasr, Iran}
\email{\textcolor[rgb]{0.00,0.00,0.84}{aydafatehi546@gmail.com}}
\address{Masoome Zahiri, Department of Pure Mathematics, Faculty of Mathematical Sciences, Eqlid University, Eqlid, Iran}
\email{\textcolor[rgb]{0.00,0.00,0.84}{m.zahiri86@gmail.com}}
\address{Saeede Zahiri, Department of  Mathematics, Faculty of Sciences, Higher Education center of Eghlid, Eghlid, Iran}
\email{\textcolor[rgb]{0.00,0.00,0.84}{saeede.zahiri@yahoo.com}}
\thanks{*Corresponding author}

\subjclass{16D15; 16D40; 16D70}
\keywords{Jordan derivation, (generalized) triangular matrix ring, linear map}

\begin{abstract} In this note, we prove that any Jordan derivation on the generalized matrix ring $T_n(R,M)$ is a derivation. This extends some well-known results of this branch due to Bre\v{s}ar et al. in the cited literature.
\end{abstract}

\maketitle

\section{\textbf{Introduction and Known Facts}}

Let $A$ be an (possibly associative) algebra and let $R$ be a commutative ring with identity. Assume that $A$ is an $R$-bi-module. Standardly, if a linear map satisfies the classical Leibniz rule, it is referred to as a \emph{derivation}:
\[
D(ab) = D(a)b + aD(b), \quad \text{for all} a, b \in A.
\]
Similarly, another version of the Leibniz rule is given by
\[
D(ab) = D(b)a + bD(a), \quad \text{for all } a, b \in A.
\]

As usual, a \emph{Jordan derivation} is an $R$-linear map that satisfies the following condition:
\[
J(a^2) = J(a)a + aJ(a), \quad \text{for all } a \in A.
\]

The traditional outcome in this regard is due to Herstein \cite{Herstein}, who showed that all Jordan derivations on a $2$-torsion free prime ring are derivations. This conclusion was later extended to semi-prime rings by Brešar \cite{Bresar}. In certain different contexts, more generalizations have been obtained as well (see \cite{Zhang,Benkovic,Yu,Bresar2,Bresar3} and references therein). Moreover, Zhang \cite{ZhangNest} demonstrated that all Jordan derivations on nest algebras are inner, which is a noteworthy fact. Likewise, Jordan derivations on upper triangular matrix algebras over commutative rings were studied by Benković \cite{Benkovic}, who established that these derivations may be written as the sum of a derivation and an anti-derivation.

The class of {\it triangular algebras}, denoted by
\[
\mathcal{T} = \begin{pmatrix}
	\mathcal{A} & \mathcal{M} \\
	0 & \mathcal{B}
\end{pmatrix},
\]
where $\mathcal{A}$ and $\mathcal{B}$ are unital algebras and $\mathcal{M}$ is a unital bi-module that is faithful as both a left $\mathcal{A}$-module and a right $\mathcal{B}$-module, is the one for which Zhang and Yu \cite{Yu} illustrated that all Jordan derivations are just derivations. Both algebraic and analytical viewpoints have recently been applied to the study of Jordan derivations in relation to trivial extension algebras of the kind described in \cite{Ghahramani1,Ghahramani2,Bresar2}. Some other closely related derivations can be found in the sources \cite{B,CH,EA} too.

\medskip

Let $R:= (R_i)_{i=1}^n$ be a family of rings and let $M:=(M_{i,j} )_{1\leq i<j\leq n}$ be a family of modules such that, for each $1\leq i<j\leq n$, $M_{i,j}$ is an $(R_i, R_j )$-bi-module. Also, assume that, for every $1\leq i<j< k \leq n$, there exists an $(R_i, R_k)$-bi-module homomorphism
$$M_{i,j}\otimes _{R_j} M_{j,k} \longrightarrow M_{i,k}$$
defined multiplicatively such that $$(m_{i,j} m_{j,k})m_{k,l} = m_{i,j} (m_{j,k}m_{k,l})\ \text{ for every}\ (m_{i,j} , m_{j,k}, m_{k,l})\in
M_{i,j} \times M_{j,k} \times M_{k,l}.$$ Then, the set $$T=\left(
                               \begin{array}{ccccc}
                                 R_1 &M_{12}&M_{13}&\cdots& M_{1n} \\
                                 0 & R_2&M_{23}&\cdots&M_{2n} \\
                                   \vdots &\ddots&\ddots&\vdots  \\
                                     0 &\cdots &&0&R_n \\
                               \end{array}
                             \right)$$ consisting of all matrices $$\left(
                               \begin{array}{ccccc}
                                m_{11}&m_{12}&m_{13}&\cdots& m_{1n} \\
                                 0 & m_{22}&m_{23}&\cdots&m_{2n} \\
                                   \vdots &\ddots&\ddots&\vdots  \\
                                     0 &\cdots &&0&m_{nn} \\
                               \end{array}
                             \right),\ m_{ii}\in R_i,\ m_{ij}\in M_{ij},\ 1\leq i\leq j\leq n,$$ equipped with the usual matrix addition and multiplication, forms a ring that is called a {\it generalized (or formal)
triangular matrix ring} and is designated by $T_n(R, M )$ (for more details, the interested authors can be referred to \cite{birk}).

The leitmotif of the present work is to show that, when $M_{1n}$ is faithful as both left $R_1$-module and right $R_n$-module and all other $M_{in}$ are faithful left $R_{in}$-module $2\leq i\leq n-1$, then any Jordan derivation of $T_n(R,M)$ is simply a derivation. This generalizes some well-established results of this sort due to Bre\v{s}ar et al. in the existing bibliography.

\section{\textbf{The Results}}

Through the text, to simplify the exposition, we just set $T:=T_n(R,M)$. When we use $E_{ij}$, we mean a matrix with a $1$ in the $i$-row and $j$-column and zeros everywhere else. We also put $T_{ij}:=E_{ii}TE_{jj}$, for any $1\leq i\leq j\leq n$. So, every $A\in T$ can uniquely be written as $$A=A_{11}+\cdots+ A_{1n}+A_{22}+\cdots+A_{nn},$$ where $A_{ij}\in T_{ij}$, $1\leq i\leq j\leq n$.

\medskip

Letting $A:=\sum_{i=1}^n\sum_{j=1}^nA_{ij}$ and $B:=\sum_{i=1}^n\sum_{j=1}^nB_{ij}$, then $AB=\sum_{t=1}^n\sum_{k=1}^nC_{tk}$, where $t\leq k$ and $$C_{tk}=A_{tt}B_{tk}+A_{tt+1}B_{t+1k}+\cdots+A_{tk}B_{kk}.$$

In order to prove our main result, we need a series of technical lemmas in what follows.

\begin{lemma}\label{1} Let $D : T \longrightarrow T$ be a linear map. If  $D$ is a Jordan derivation, then
$D(E_{ii})\in T_{1i} + T_{2i}+\cdots + T_{i-1i} + T_{ii+1}+T_{ii+2} + \cdots +T_{in}$.
 \end{lemma}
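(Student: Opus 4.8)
The plan is to exploit the fact that $E_{ii}$ is an idempotent of $T$ and to plug it into the Jordan identity. Since $E_{ii}^2 = E_{ii}$, the defining property of a Jordan derivation gives
\[
D(E_{ii}) = D(E_{ii}^2) = D(E_{ii})E_{ii} + E_{ii}D(E_{ii}).
\]
The strategy is then to expand both sides along the block decomposition $T = \bigoplus_{1\le p\le q\le n} T_{pq}$ and to read off, component by component, which blocks of $D(E_{ii})$ are forced to vanish.

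First I would write $D(E_{ii}) = \sum_{1\le p\le q\le n} X_{pq}$ with $X_{pq}\in T_{pq}$, which is legitimate by the uniqueness of the block decomposition recalled just before the lemma. The key computational inputs are the one-sided actions of the idempotent on a block: since $T_{pq} = E_{pp}TE_{qq}$ together with $E_{ii}E_{pp} = \delta_{ip}E_{pp}$ and $E_{qq}E_{ii} = \delta_{qi}E_{qq}$, one gets $E_{ii}X_{pq} = \delta_{ip}X_{pq}$ and $X_{pq}E_{ii} = \delta_{qi}X_{pq}$. Substituting these into the right-hand side of the displayed identity collapses it to
\[
D(E_{ii})E_{ii} + E_{ii}D(E_{ii}) = \sum_{p\le i} X_{pi} + \sum_{q\ge i} X_{iq},
\]
that is, only the blocks lying in column $i$ or in row $i$ survive.

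Next I would match this against the untouched expansion $D(E_{ii}) = \sum_{p\le q} X_{pq}$, comparing the $T_{pq}$-components on the two sides. For a block $T_{pq}$ with $p\ne i$ and $q\ne i$ the right-hand side contributes nothing, so $X_{pq} = 0$; for a block in row $i$ with $q>i$, or in column $i$ with $p<i$, both sides agree and no information is lost. The only point needing a little care is the diagonal block $T_{ii}$: there the term $X_{ii}$ appears once on the left but twice on the right (once from each sum), so the comparison forces $X_{ii} = 2X_{ii}$, whence $X_{ii} = 0$. This last vanishing is exactly what removes $T_{ii}$ from the target set, and it needs no torsion hypothesis, since $X_{ii}=2X_{ii}$ already gives $X_{ii}=0$ in any additive group. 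Collecting the surviving blocks $X_{pi}$ $(p<i)$ and $X_{iq}$ $(q>i)$ then places $D(E_{ii})$ in $T_{1i}+\cdots+T_{i-1,i}+T_{i,i+1}+\cdots+T_{in}$, as claimed.

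I expect no serious obstacle here; the whole argument is a direct Peirce-decomposition bookkeeping driven by the single relation $E_{ii}^2=E_{ii}$. If anything, the one place to stay alert is the double-counting of the diagonal block, which is precisely what yields $X_{ii}=0$ and thereby excludes $T_{ii}$ from the conclusion.
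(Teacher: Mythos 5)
Your proposal is correct and follows essentially the same route as the paper: both apply the Jordan identity to the idempotent $E_{ii}$, observe that $D(E_{ii})E_{ii}+E_{ii}D(E_{ii})$ retains only the row-$i$ and column-$i$ blocks, and use the resulting relation $X_{ii}=2X_{ii}$ to kill the diagonal block. The only difference is cosmetic — you phrase it via the Peirce decomposition $E_{ii}X_{pq}=\delta_{ip}X_{pq}$, $X_{pq}E_{ii}=\delta_{qi}X_{pq}$, while the paper writes out the matrices explicitly — and your version is in fact slightly cleaner in making the vanishing of the off-row/off-column blocks explicit.
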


 \begin{proof} Assume that $$D(E_{ii})=\left(
                               \begin{array}{ccccc}
                                 a_{11}^{ii} &m_{12}^{ii}&\cdots &  m_{1n}^{ii} \\
                                 0 & a_{22}^{ii}&m_{23}^{ii}\cdots &m_{2n}^{ii}\\
                                   \vdots &\ddots&\ddots&\vdots  \\
                                   0 &\cdots &0& a_{nn}^{ii}\\
                               \end{array}
                             \right).$$ Since, for any $1\leq i\leq n$, $D(E_{ii})=E_{ii}D(E_{ii})+E_{ii}D(E_{ii})$ holds, we subsequently calculate that
                             $$\left(
                               \begin{array}{ccccc}
                                 a_{11}^{ii} &m_{12}^{ii}&\cdots &  m_{1n}^{ii} \\
                                 0 & a_{22}^{ii}&m_{23}^{ii}\cdots &m_{2n}^{ii}\\
                                   \vdots &\ddots&\ddots&\vdots  \\
                                   0 &\cdots &0& a_{nn}^{ii}\\
                               \end{array}
                             \right)=$$$$E_{ii}\left(
                               \begin{array}{ccccc}
                                 a_{11}^{ii} &m_{12}^{ii}&\cdots &  m_{1n}^{ii} \\
                                 0 & a_{22}^{ii}&m_{23}^{ii}\cdots &m_{2n}^{ii}\\
                                   \vdots &\ddots&\ddots&\vdots  \\
                                   0 &\cdots &0& a_{nn}^{ii}\\
                               \end{array}
                             \right)+\left(
                               \begin{array}{ccccc}
                                 a_{11}^{ii} &m_{12}^{ii}&\cdots &  m_{1n}^{ii} \\
                                 0 & a_{22}^{ii}&m_{23}^{ii}\cdots &m_{2n}^{ii}\\
                                   \vdots &\ddots&\ddots&\vdots  \\
                                   0 &\cdots &0& a_{nn}^{ii}\\
                               \end{array}
                             \right)E_{ii}=$$$$\left(
                               \begin{array}{cccccccc}
                                 0 &\cdots &0&0&0&\cdots &0 \\
                                  \vdots &\ddots&\ddots&\vdots  \\
                                  0 & \cdots &0&0&0&\cdots & 0\\
                                    0 &\cdots &0&m_{ii}^{ii}&m_{ii+1}^{ii}&\cdots &m_{in}^{ii}\\
                                     0 &\cdots &0& 0&0 &\cdots&0\\
                                     \vdots &&&\vdots  & &&\vdots  \\
                                     0 &&0&0&\cdots &0&0  \\
                               \end{array}
                             \right)+\left(
                               \begin{array}{cccccccc}
                                 0 &\cdots &0&m_{1i}^{ii}&0&\cdots &0 \\
                                 0 & \cdots &0&m_{2i}^{ii}&0&\cdots & 0\\
                                   \vdots &\ddots&\ddots&\vdots  \\
                                   0 &\cdots &0& m_{ii}^{ii}&0&\cdots &0\\
                                    0 &\cdots &0&0&0& \cdots &0\\
                                     \vdots &&&\vdots  & &&\vdots  \\
                                     0 &&0&0&\cdots &&0  \\
                               \end{array}
                             \right)$$$$=\left(
                               \begin{array}{cccccccc}
                                 0 &\cdots &0&m_{1i}^{ii}&0&\cdots &0 \\
                                 0 & \cdots &0&m_{2i}^{ii}&0&\cdots & 0\\
                                   \vdots &\ddots&\ddots&\vdots  \\
                                   0 &\cdots &0& 2m_{ii}^{ii}&m_{ii+1}^{ii}&\cdots &m_{in}^{ii}\\
                                    0 &\cdots &0&0&0& \cdots &0\\
                                     \vdots &&&\vdots  & &&\vdots  \\
                                     0 &&0&0&&\cdots &0  \\
                               \end{array}
                             \right).$$ It, therefore, follows that $m_{ii}^{ii}=2m_{ii}^{ii}$ and so $m_{ii}^{ii}=0$. Thus, $$D(E_{ii})=\left(
                               \begin{array}{cccccccc}
                                 0 &\cdots &0&m_{1i}^{ii}&0&\cdots &0 \\
                                   \vdots &\ddots&\ddots&\vdots  \\
                                 0 & \cdots &0&m_{i-1i}^{ii}&0&\cdots & 0\\
                                   0 &\cdots &0& 0&m_{ii+1}^{ii}&\cdots &m_{in}^{ii}\\
                                    0 &\cdots &0&0&0& \cdots &0\\
                                     \vdots &&&\vdots  & &&\vdots  \\
                                     0 &&0&0&&\cdots &0  \\
                               \end{array}
                             \right),$$ as required and we are done.
                             \end{proof}

For every $1\leq i\leq n$, we can put $$D(E_{ii}):= m_{1i}^{ ii}+m_{2i}^{ ii} +\cdots+ m_{i-1i}^{ ii}+m_{ii+1}^{ ii}+\cdots+m_{in}^{ ii},$$ where $m^{ ii}_{tk}\in T_{tk}$, $1\leq t\leq k\leq n$. Thereby, we arrive at the following.

\begin{lemma}\label{112} Let $D : T \longrightarrow T$ be a linear map. If $D$ is a Jordan derivation, then
$D(E_{ii}R_i)\subseteq T_{1i}+\cdots T_{i-1i}+T_{ii}+T_{ii+1}+\cdots +T_{in}$, for every $1\leq i\leq n$.
 \end{lemma}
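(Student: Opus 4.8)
The plan is to peel off the Peirce components of $D(x)$ for $x\in E_{ii}R_i$ one block at a time. Writing $D(x)=\sum_{1\le t\le k\le n}D(x)_{tk}$ with $D(x)_{tk}\in T_{tk}$, what has to be shown is precisely that $D(x)_{tk}=0$ whenever $t\ne i$ and $k\ne i$, so that only the $i$-th row, the $i$-th column and the $(i,i)$-block can survive. The engine will be the polarized Jordan identity: substituting $a+b$ for $a$ in $D(a^2)=D(a)a+aD(a)$ and removing the pure squares gives
\[
D(ab+ba)=D(a)b+aD(b)+D(b)a+bD(a),\qquad a,b\in T.
\]
Fixing $j\ne i$ and taking $a=x$, $b=E_{jj}$, and using that $x=E_{ii}xE_{ii}$ forces $E_{jj}x=xE_{jj}=0$, the left-hand side vanishes and we are left with the single workhorse relation
\[
0=D(x)E_{jj}+E_{jj}D(x)+xD(E_{jj})+D(E_{jj})x.
\]

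To read off an individual block I would sandwich this relation between matrix idempotents. For a strictly upper off-cross block, i.e. $t<k$ with $t\ne i\ne k$, I use the relation with $j=t$ and multiply by $E_{tt}$ on the left and $E_{kk}$ on the right. The two mixed terms die because $E_{tt}x=0$ annihilates $E_{tt}\,xD(E_{tt})\,E_{kk}$ and $xE_{kk}=0$ annihilates $E_{tt}\,D(E_{tt})x\,E_{kk}$; moreover $E_{tt}D(x)E_{tt}E_{kk}=0$ since $E_{tt}E_{kk}=0$, so only $E_{tt}D(x)E_{kk}=D(x)_{tk}$ remains, giving $D(x)_{tk}=0$ at once. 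For a diagonal off-cross block, $t=k=j$ with $j\ne i$, I instead sandwich the relation (with this same $j$) between $E_{jj}$ on both sides; the mixed terms vanish for the identical reason, but now \emph{both} remaining summands collapse to $D(x)_{jj}$, so the relation yields only $2\,D(x)_{jj}=0$.

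The diagonal blocks are thus the one delicate point: off the diagonal the wanted component occurs with coefficient $1$ and is eliminated immediately, whereas on the diagonal it occurs with coefficient $2$. To conclude I would invoke that $T$ is $2$-torsion-free, equivalently that each diagonal ring $R_j$ is, which is the hypothesis inherited from the classical Herstein--Bre\v{s}ar framework, in order to pass from $2\,D(x)_{jj}=0$ to $D(x)_{jj}=0$. With every off-cross component of $D(x)$ now zero, $D(x)$ lies in $T_{1i}+\cdots+T_{i-1i}+T_{ii}+T_{ii+1}+\cdots+T_{in}$; since $x\in E_{ii}R_i$ was arbitrary, this is exactly the assertion of the lemma.
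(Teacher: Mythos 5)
Your argument is essentially sound but takes a genuinely different route from the paper's, and it quietly imports a hypothesis the paper never states. The paper's proof is a two-line affair: since $E_{ii}r_i=E_{ii}r_iE_{ii}$, it invokes the Jordan triple identity $D(eae)=D(e)ae+eD(a)e+eaD(e)$ with $e=E_{ii}$ to write $D(E_{ii}r_i)=D(E_{ii})r_iE_{ii}+E_{ii}D(r_i)E_{ii}+E_{ii}r_iD(E_{ii})$; the first summand lands in $T_{1i}+\cdots+T_{i-1i}$ by Lemma \ref{1}, the second in $T_{ii}$, and the third in $T_{ii+1}+\cdots+T_{in}$, which is the claim. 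You instead polarize the Jordan axiom against each $E_{jj}$, $j\neq i$, and eliminate the Peirce components of $D(x)$ one block at a time; your off-diagonal computation ($t<k$, $t\neq i\neq k$) is clean and needs nothing beyond the polarized identity. The divergence is exactly where you flag it: on the diagonal blocks you obtain only $2D(x)_{jj}=0$ and must assume $2$-torsion-freeness to finish. That hypothesis appears nowhere in the lemma, in the main theorem, or anywhere else in the paper, so measured against the statement as given your proof establishes something weaker, and the appeal to the ``Herstein--Bre\v{s}ar framework'' does not repair this. To be fair to you, the paper is not on obviously firmer ground: the triple identity it uses is asserted without proof, and its standard derivation from the Jordan-square axiom itself passes through $2$-torsion-freeness --- from the square axiom alone one gets precisely the relation $2D(x)_{jj}=0$ that you found. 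So your computation makes explicit a torsion issue that the paper's one-liner hides inside an unproved identity; nevertheless, relative to the lemma as stated, the extra $2$-torsion-freeness assumption is a genuine gap that you would need to either justify or declare as an added hypothesis.
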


\begin{proof} As, for every $r_i\in R_i$, $E_{ii}r_i=E_{ii}r_iE_{ii}$, we get $$D(E_{ii}r_i)=D(E_{ii}r_iE_{ii})=D(E_{ii})r_iE_{ii}+E_{ii}D(r_i)E_{ii}+E_{ii}r_iD(E_{ii}).$$ Using Lemma \ref{1} and the fact that $E_{ii}D(r_i)E_{ii}\in T_{ii}$, we deduce $$D(E_{ii}r_i)\in T_{1i} +  T_{2i}+\cdots + T_{ii} +  T_{ii+1}+T_{ii+2} + \cdots  +T_{in},$$ as needed.
\end{proof}

\begin{lemma}\label{2} Let $D : T \longrightarrow T$ be a linear map. If $D$ is a Jordan derivation, then
$D(T_{ij})\subseteq T_{1j}+\cdots T_{i-1j}+T_{ij}+T_{ij+1}+\cdots +T_{in}$, for every $1\leq i< j\leq n$.
 \end{lemma}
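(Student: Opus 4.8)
The plan is to bracket the corner block $T_{ij}$ between the two diagonal idempotents $E_{ii}$ and $E_{jj}$ and run this through the polarized form of the Jordan identity. First I would record the linearization of $D(a^2)=D(a)a+aD(a)$, obtained by substituting $a+b$ for $a$ and cancelling the pure squares:
\[
D(ab+ba)=D(a)b+aD(b)+D(b)a+bD(a),\qquad a,b\in T .
\]
This follows from $R$-linearity alone, so no $2$-torsion hypothesis is required.

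Fix $X\in T_{ij}$ with $i<j$. Since $E_{jj}E_{ii}=0$, one checks directly that $XE_{ii}=0$, $E_{ii}X=X$, $XE_{jj}=X$ and $E_{jj}X=0$; in particular $E_{ii}X+XE_{ii}=X$ and $E_{jj}X+XE_{jj}=X$. Substituting $(a,b)=(X,E_{ii})$ and then $(a,b)=(X,E_{jj})$ into the polarized identity produces
\[
D(X)=E_{ii}D(X)+D(X)E_{ii}+XD(E_{ii})+D(E_{ii})X
\]
together with the analogous relation obtained by replacing every $E_{ii}$ with $E_{jj}$.

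The heart of the argument is to evaluate the four cross terms by means of Lemma \ref{1}. Writing $D(E_{ii})$ as the sum of a column-$i$ part in $\sum_{t<i}T_{ti}$ and a row-$i$ part in $\sum_{k>i}T_{ik}$, one sees that $E_{jj}D(E_{ii})=0$, whence $XD(E_{ii})=0$, while $D(E_{ii})E_{ii}$ retains only the column-$i$ part, so that $D(E_{ii})X\in\sum_{t<i}T_{tj}$. The symmetric computation yields $D(E_{jj})X=0$ and $XD(E_{jj})\in\sum_{k>j}T_{ik}$. I expect this index bookkeeping to be the main obstacle, since one must track precisely which blocks survive each one-sided multiplication by an idempotent.

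It then remains to compare the $T_{tk}$-blocks on both sides. Writing $Y=\sum_{t\le k}Y_{tk}$ for the block decomposition of an element $Y$, one has $E_{ii}Y+YE_{ii}=2Y_{ii}+\sum_{k>i}Y_{ik}+\sum_{t<i}Y_{ti}$, so the first relation kills the $T_{ii}$-block of $D(X)$ and confines $D(X)$ to row $i$, column $i$, and $\sum_{t<i}T_{tj}$; the second relation symmetrically confines $D(X)$ to row $j$, column $j$, and $\sum_{k>j}T_{ik}$. Intersecting the two supports eliminates the spurious column-$i$ blocks and the row-$i$ blocks $T_{ik}$ with $i<k<j$, leaving exactly $\{T_{tj}:t\le i\}\cup\{T_{ik}:k\ge j\}$, that is $T_{1j}+\cdots+T_{i-1j}+T_{ij}+T_{ij+1}+\cdots+T_{in}$. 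This is the asserted containment, and it is precisely the interplay of both relations — neither alone suffices — that pins the image down to the hook.
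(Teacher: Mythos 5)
Your argument is correct, and it reaches the conclusion by a genuinely different route from the paper. The paper fixes an off-hook block position $(k,t)$, applies $D$ to the Jordan triple relation $E_{kk}m_{ij}E_{tt}+E_{tt}m_{ij}E_{kk}=0$ (implicitly invoking the identity $D(abc+cba)=D(a)bc+aD(b)c+abD(c)+D(c)ba+cD(b)a+cbD(a)$ for Jordan derivations, which it does not derive), and then splits into four cases according to whether $k=j$ or $t=i$ to kill $E_{kk}D(m_{ij})E_{tt}$ one position at a time. You instead test $X\in T_{ij}$ only against the two idempotents intrinsic to its block, using nothing beyond the degree-two polarization $D(ab+ba)=D(a)b+aD(b)+D(b)a+bD(a)$ and Lemma \ref{1}; each of the relations $X=E_{ii}X+XE_{ii}$ and $X=E_{jj}X+XE_{jj}$ yields a support constraint on $D(X)$, and intersecting the two supports leaves exactly the hook $T_{1j}+\cdots+T_{i-1j}+T_{ij}+T_{ij+1}+\cdots+T_{in}$. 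Your block bookkeeping checks out: $XD(E_{ii})=XE_{jj}D(E_{ii})=0$, $D(E_{ii})X=D(E_{ii})E_{ii}X\in\sum_{t<i}T_{tj}$, and symmetrically for $E_{jj}$; and the identity $Y_{ii}=2Y_{ii}$ needs no torsion hypothesis. What your version buys is twofold: it avoids the unproved triple-product identity, and the intersection argument disposes of \emph{every} off-hook block at once — including the diagonal blocks $T_{ss}$, the blocks $T_{ik}$ with $i<k<j$, and the blocks $T_{kj}$ with $k>i$, all of which fall outside the range $k\neq i$, $t\neq j$, $k<t$ to which the paper's case analysis confines itself. The paper's method is more local and would adapt more directly if one only wanted a single block of $D(m_{ij})$ to vanish, but as a proof of the stated containment yours is the more complete and economical of the two.
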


\begin{proof} Let $m_{ij}\in T_{ij}$. Observe that we will be ready if we show that $E_{kk}D({m_{ij}})E_{tt}=0$ is true for all $1\leq k<t\leq n$ with $k\neq i$ and $t\neq j$. To that end, as $E_{kk}m_{ij}E_{tt}=0$ is valid,
we break the verification into four cases:

\medskip

\noindent\textbf{Case 1:} If $k\neq j$ and $t\neq i$, then $E_{tt}m_{ij}E_{ kk}=0$. Hence, $$E_{kk}m_{ij}E_{tt}+E_{tt}m_{ij}E_{ kk}=0.$$ If follows that $$0=D(E_{kk})m_{ij}E_{tt}+E_{kk}D(m_{ij})E_{tt}+E_{kk}m_{ij}D(E_{tt})$$
$$+D(E_{tt})m_{ij}E_{kk}+E_{tt}D(m_{ij})E_{kk}+E_{tt}m_{ij}D(E_{kk})=$$
$$E_{kk}D(m_{ij})E_{tt}+E_{tt}D(m_{ij})E_{kk}.$$ It also follows that $0=E_{kk}E_{kk}D(m_{ij})E_{tt}E_{tt}+E_{kk}E_{tt}D(m_{ij})E_{kk}E_{tt}=E_{kk}D(m_{ij})E_{tt}$, as wanted.

\medskip

\noindent\textbf{Case 2:} If $k\neq j$ and $t=i$, then $E_{tt}m_{ij}E_{ kk}=0$ and so $$E_{kk}m_{ij}E_{tt}+E_{tt}m_{ij}E_{kk}=0.$$ Thus,
$$0=D(E_{kk})m_{ij}E_{tt}+E_{kk}D(m_{ij})E_{tt}+E_{kk}m_{ij}D(E_{tt})$$
$$+D(E_{tt})m_{ij}E_{kk}+E_{tt}D(m_{ij})E_{kk}+E_{tt}m_{ij}D(E_{kk})=$$
$$E_{kk}D(m_{ij})E_{tt}+E_{tt}D(m_{ij})E_{kk}+E_{tt}m_{ij}D(E_{kk}).$$ It now follows that $$0=E_{kk}D(m_{ij})E_{tt}=E_{kk}E_{kk}D(m_{ij})E_{tt}E_{tt}+E_{kk}E_{tt}D(m_{ij})E_{kk}E_{tt}+E_{kk}E_{tt}m_{ij}D(E_{kk})E_{tt},$$   as desired.

\medskip

\noindent\textbf{Case 3:} If $k= j$ and $t\neq i$, then $E_{tt}m_{ij}E_{ kk}=0$ and thus $$E_{kk}m_{ij}E_{tt}+E_{tt}m_{ij}E_{ kk}=0.$$ So,
$$0=D(E_{kk})m_{ij}E_{tt}+E_{kk}D(m_{ij})E_{tt}+E_{kk}m_{ij}D(E_{tt})$$
$$+D(E_{tt})m_{ij}E_{kk}+E_{tt}D(m_{ij})E_{kk}+E_{tt}m_{ij}D(E_{kk})=$$
$$E_{kk}D(m_{ij})E_{tt}+D(E_{tt})m_{ij}E_{kk}+E_{tt}D(m_{ij})E_{kk}.$$ As $E_{kk}D(m_{ij})E_{tt}\in T_{kt}$ and $D(E_{tt})m_{ij}E_{kk}+E_{tt}D(m_{ij})E_{kk}\in T_{tk}$, we infer that $$0=E_{kk}D(m_{ij})E_{tt}=D(E_{tt})m_{ij}E_{kk}+E_{tt}D(m_{ij})E_{kk},$$ as asked.

\medskip

\noindent\textbf{Case 4:} If $k= j$ and $t= i$, and since $k< t$, then via the definition of a generalized triangular matrix ring, we conclude $T_{tk}=0$ and, therefore, $m_{ij}=m_{tk}\in T_{tk}=0$. Thus, $D (m_{ij})=0$.\\

So, in either cases, we find that $E_{kk}D(m_{ij})E_{tt}=0$, as pursued.
\end{proof}

Let $D: T \longrightarrow T$ be a linear map. If $D$ is a Jordan derivation, then motivated by Lemma \ref{2}, we can design $D_{ij}(A):=E_{ii}D(A)E_{jj}$, for all $1\leq i< j \leq n$. We also may put $m_{ij}:=E_{ii}m_{ij}E_{jj}\in T_{ij}$. Hence, one obtains that
$$D(m_{ij})=\sum_{k=1}^{i}D_{kj}(m_{ij})+\sum_{t=j+1}^nD_{it}(m_{ij}),\ 1\leq i< j \leq n.$$

We proceed by proving the following technicality.

\begin{lemma} Let $D: T \longrightarrow T$ be a Jordan derivation. Then, $$D\left(\left(
                               \begin{array}{ccccc}
                                 a_{11} &m_{12}&m_{13}&\cdots& m_{1n} \\
                                 0 & a_{22}&m_{23}&\cdots&m_{2n} \\
                                   \vdots &\ddots&\ddots&\vdots  \\
                                     0 &\cdots &&0&a_{nn} \\
                               \end{array}
                             \right)\right)=\left(
                               \begin{array}{ccccc}
                                d_{11} &d_{12}&d_{13}&\cdots& d_{1n} \\
                                 0 & d_{22}&d_{23}&\cdots&d_{2n} \\
                                   \vdots &\ddots&\ddots&\vdots  \\
                                     0 &\cdots &&0&d_{nn} \\
                               \end{array}
                             \right),$$ where $d_{ii}=D_{ii}(a_{ii})$ and $$d_{ij}=D_{ij}(a_{ii}) +D_{ij}(m_{ii+1}) +\cdots +D_{ij}(m_{ij})+D_{ij}(m_{i+1j})+\cdots +D_{ij}(a_{jj}),$$ $1\leq i< j\leq n$.
\end{lemma}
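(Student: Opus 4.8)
The plan is to reduce the statement to additivity of $D$ together with the support information already established in Lemmas \ref{1}, \ref{112} and \ref{2}. First I would write the given matrix in its unique block decomposition
$$A=\sum_{i=1}^n a_{ii}+\sum_{1\le i<j\le n}m_{ij},\qquad a_{ii}\in T_{ii},\ m_{ij}\in T_{ij}.$$
Since $D$ is $R$-linear, hence additive, $D(A)=\sum_i D(a_{ii})+\sum_{i<j}D(m_{ij})$, and to read off the $(i,j)$-block $d_{ij}:=E_{ii}D(A)E_{jj}=D_{ij}(A)$ it is enough to decide, for each component $X$ of $A$, whether $D_{ij}(X)=E_{ii}D(X)E_{jj}$ can be nonzero and then collect the survivors.

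The selection of surviving terms is dictated entirely by the containment lemmas. For a strictly upper component $m_{pq}\in T_{pq}$, Lemma \ref{2} confines $D(m_{pq})$ to column $q$ in rows $\le p$ together with row $p$ in columns $\ge q$; hence $D_{ij}(m_{pq})=0$ unless either $p=i$ and $q\le j$, or $q=j$ and $p\ge i$. For a diagonal component $a_{pp}\in T_{pp}=E_{pp}R_p$, Lemma \ref{112} confines $D(a_{pp})$ to column $p$ and row $p$, so $D_{ij}(a_{pp})=0$ unless $p=i$ (with $j\ge i$) or $p=j$ (with $i\le j$). I would then intersect these support conditions with the triangularity constraint $p\le q$ borne by every component of $A$.

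For the diagonal block the constraints collapse: a contributing $X\in T_{pq}$ must satisfy $p\le q$ and meet row $i$ and column $i$ at $(i,i)$, forcing $p=q=i$, so only $a_{ii}$ survives and $d_{ii}=D_{ii}(a_{ii})$. For an off-diagonal block $d_{ij}$ with $i<j$, the condition ``$p=i$ and $i\le q\le j$'' selects the row components $a_{ii},m_{i,i+1},\dots,m_{ij}$, while the condition ``$q=j$ and $i\le p\le j$'' selects the column components $m_{ij},m_{i+1,j},\dots,a_{jj}$; their overlap is exactly the transition term $m_{ij}$, which the asserted formula lists once. Summing the surviving $D_{ij}(\cdot)$ then reproduces the stated expression for $d_{ij}$.

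The argument is in essence bookkeeping, so the only genuine care is at the two endpoints of the index range: ensuring $m_{ij}$ is counted a single time, since it is the unique component satisfying both selection conditions, and handling the diagonal endpoints $a_{ii}$ and $a_{jj}$ through Lemma \ref{112} while the off-diagonal $m_{pq}$ are handled through Lemma \ref{2}. I expect the main subtlety to lie precisely in this uniform treatment of the diagonal entries alongside the strictly upper entries, rather than in any deeper algebraic difficulty.
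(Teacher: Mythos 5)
Your proposal is correct and follows essentially the same route as the paper: decompose $A$ by linearity into its diagonal and strictly upper components and then invoke Lemmas \ref{112} and \ref{2} to determine which components can contribute to each block $D_{ij}(A)$. The paper leaves the final bookkeeping implicit ("apply Lemma \ref{112} and Lemma \ref{2} to get the claim"), whereas you carry it out explicitly, including the correct single counting of the overlap term $m_{ij}$.
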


\begin{proof} Set $A:=\left(
                               \begin{array}{ccccc}
                                 a_{11} &m_{12}&m_{13}&\cdots& m_{1n} \\
                                 0 & a_{22}&m_{23}&\cdots&m_{2n} \\
                                   \vdots &\ddots&\ddots&\vdots  \\
                                     0 &\cdots &&0&a_{nn} \\
                               \end{array}
                             \right)$. Thus, one computes that $$D(A)=D(E_{11}a_{11}E_{11})+D(E_{11}m_{12}E_{22})+\cdots +D(E_{11}m_{1n}E_{nn})+$$$$\cdots +D(E_{n-1n-1}m_{n-1n}E_{nn})+D(E_{nn}a_{nn}E_{nn}).$$ Now, apply  Lemma \ref{112} and Lemma \ref{2} to get the claim.
\end{proof}

\begin{lemma} Let $D : T \longrightarrow T$ be a linear map. If $D$ is a Jordan derivation, then
$D_{is}(m_{ij})=m_{ij}\otimes D_{js}(E_{jj}),$ for each $j+1\leq s$ and $m_{ij}\in T_{ij}$, $i< j $.
\end{lemma}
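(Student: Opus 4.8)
The plan is to feed the single idempotent $E_{jj}$ into the polarized Jordan identity together with $m_{ij}$ and then read off the $(i,s)$-entry. Recall that any Jordan derivation satisfies
$$D(ab+ba)=D(a)\,b+a\,D(b)+D(b)\,a+b\,D(a)$$
for all $a,b\in T$, obtained by expanding $D((a+b)^2)$ and subtracting $D(a^2)$ and $D(b^2)$ (this uses only additivity, which $D$ has). I would apply it with $a:=m_{ij}$ and $b:=E_{jj}$.

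The crucial elementary observation is that, since $m_{ij}\in T_{ij}=E_{ii}TE_{jj}$ with $i<j$, we have $m_{ij}E_{jj}=m_{ij}$ while $E_{jj}m_{ij}=0$. Hence $m_{ij}E_{jj}+E_{jj}m_{ij}=m_{ij}$, and the identity above collapses to
$$D(m_{ij})=D(m_{ij})E_{jj}+m_{ij}D(E_{jj})+D(E_{jj})m_{ij}+E_{jj}D(m_{ij}).$$
I would then compress this equation by multiplying on the left by $E_{ii}$ and on the right by $E_{ss}$ for $s>j$, which produces exactly $D_{is}(m_{ij})$ on the left-hand side.

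The bulk of the work is to check that three of the four right-hand terms vanish after compression. Two of them, $E_{ii}D(m_{ij})E_{jj}E_{ss}$ and $E_{ii}E_{jj}D(m_{ij})E_{ss}$, die by orthogonality of idempotents, since $E_{jj}E_{ss}=0$ and $E_{ii}E_{jj}=0$ (the indices $i,j,s$ being pairwise distinct). The term $E_{ii}D(E_{jj})m_{ij}E_{ss}$ vanishes because, by Lemma \ref{1}, every component of $D(E_{jj})$ lies in some $T_{tj}$ with $t<j$ or some $T_{jk}$ with $k>j$; multiplying any such component on the right by $m_{ij}\in T_{ij}$ is nonzero only when the column index of the left factor equals $i$, whereas those column indices are $j$ and $k>j$, none of which equals $i<j$. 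Hence $D(E_{jj})m_{ij}=0$.

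The single surviving term is $E_{ii}m_{ij}D(E_{jj})E_{ss}$. Using $E_{ii}m_{ij}=m_{ij}=m_{ij}E_{jj}$, it equals $m_{ij}\bigl(E_{jj}D(E_{jj})E_{ss}\bigr)=m_{ij}\,D_{js}(E_{jj})$, where the product is taken through the bimodule multiplication $M_{ij}\otimes_{R_j}M_{js}\to M_{is}$ that defines the ring structure; in the notation of the paper this is precisely $m_{ij}\otimes D_{js}(E_{jj})$. Combining the four computations gives $D_{is}(m_{ij})=m_{ij}\otimes D_{js}(E_{jj})$, as claimed. The only step requiring genuine care, as opposed to routine idempotent bookkeeping, is the vanishing of $D(E_{jj})m_{ij}$, which is where Lemma \ref{1} and the triangular structure (failure of the index match $k=i$) must be invoked.
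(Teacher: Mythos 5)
Your proposal is correct and follows essentially the same route as the paper: both start from $m_{ij}=m_{ij}E_{jj}+E_{jj}m_{ij}$, apply the polarized Jordan identity to this sum, and then kill the three spurious terms using Lemma \ref{1} and the triangular structure. The only difference is presentational --- you compress the four-term identity at once by multiplying with $E_{ii}$ and $E_{ss}$, whereas the paper first expands $D(m_{ij})$ and $D(E_{jj})$ into their components and cancels termwise --- and your version is, if anything, the cleaner bookkeeping.
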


\begin{proof} As $i\neq j$, we derive $m_{ij}=m_{ij}E_{jj}+E_{jj}m_{ij}$. So, $$D(m_{ij})=D(m_{ij})E_{jj}+m_{ij}\otimes D(E_{jj})+D(E_{jj})\otimes m_{ij}+E_{jj}D(m_{ij}).$$ Furthermore, $$D(m_{ij})=(\sum_{k=1}^{i}D_{kj}(m_{ij})+\sum_{t=j+1}^nD_{it}(m_{ij}))E_{jj}+m_{ij}\otimes (\sum_{k=1}^{j-1}D_{kj}(E_{jj}))+\sum_{t=j+1}^nD_{jt}(E_{jj}))$$$$+
(\sum_{k=1}^{j-1}D_{kj}(E_{jj})+\sum_{t=j+1}^nD_{jt}(E_{jj}))\otimes m_{ij}+E_{jj}(\sum_{k=1}^{i}D_{kj}(m_{ij})+\sum_{t=j+1}^nD_{it}(m_{ij})).$$ However, since $\sum_{t=j+1}^nD_{it}(m_{ij})\in T_{ij+1}+\cdots +T_{in}$ and $E_{jj}\in R_{jj}$, we detect that $(\sum_{t=j+1}^nc_{it}^{m_{ij}})E_{jj}=0$. But since $m_{ij}=m_{ij}E_{jj}$ and $i< j$, we have that $$m_{ij}\otimes (\sum_{k=1}^{i-1}D_{kj}(E_{jj}))=m_{ij}E_{jj}\otimes (\sum_{k=1}^{i-1}D_{kj}(E_{jj}))=m_{ij}\otimes E_{jj}(\sum_{k=1}^{i-1}D_{kj}(E_{jj}))=0.$$ Analogously, $$(\sum_{k=1}^{i-1}D_{kj}(E_{jj})+\sum_{t=j+1}^nD_{jt}(E_{jj}))\otimes m_{ij}+E_{jj}(\sum_{k=1}^{i}D_{kj}(m_{ij})+\sum_{t=j+1}^nD_{it}(m_{ij}))=0,$$ whence $$D(m_{ij})=\sum_{k=1}^{i}D_{kj}(m_{ij})E_{jj}+m_{ij}\otimes \sum_{t=j+1}^nD_{jt}(E_{jj}).$$ It, thus, follows that $D_{is}(m_{ij})=m_{ij}\otimes D_{js}(E_{jj})$, for each $j+1\leq s$ and $m_{ij}\in T_{ij}$, $i< j$, as expected.
\end{proof}

\begin{lemma} Let $D : T \longrightarrow T$ be an $R$-linear map. If $D$ is a Jordan derivation, then $D_{ij}(E_{ii})+D_{ij}(E_{jj})=0$, for any $i\neq j$.
\end{lemma}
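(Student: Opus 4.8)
The plan is to exploit the fact that $E_{ii}$ and $E_{jj}$ are \emph{orthogonal} idempotents whenever $i\neq j$, so that their sum $E_{ii}+E_{jj}$ is again idempotent, and then to compare the Jordan derivation identity applied to the three squares $E_{ii}^2$, $E_{jj}^2$ and $(E_{ii}+E_{jj})^2$.

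First I would record the three instances of the Jordan identity. Writing $x:=D(E_{ii})$ and $y:=D(E_{jj})$, the idempotency of $E_{ii}$ and of $E_{jj}$ gives $$x=xE_{ii}+E_{ii}x,\qquad y=yE_{jj}+E_{jj}y,$$ while, since $E_{ii}+E_{jj}$ is idempotent and $D$ is additive (so that $D(E_{ii}+E_{jj})=x+y$), expanding $D\big((E_{ii}+E_{jj})^2\big)=D(E_{ii}+E_{jj})(E_{ii}+E_{jj})+(E_{ii}+E_{jj})D(E_{ii}+E_{jj})$ yields $$x+y=(x+y)(E_{ii}+E_{jj})+(E_{ii}+E_{jj})(x+y).$$ Subtracting the first two relations from this third one cancels the self-terms $xE_{ii}+E_{ii}x$ and $yE_{jj}+E_{jj}y$, leaving the cross-term identity $$xE_{jj}+E_{jj}x+yE_{ii}+E_{ii}y=0.$$

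The final step is to multiply this identity on the left by $E_{ii}$ and on the right by $E_{jj}$, invoking the orthogonality relations $E_{ii}E_{jj}=0=E_{jj}E_{ii}$. The terms $E_{ii}(E_{jj}x)E_{jj}$ and $E_{ii}(yE_{ii})E_{jj}$ vanish, since each contains a factor $E_{ii}E_{jj}$, whereas $E_{ii}(xE_{jj})E_{jj}=E_{ii}xE_{jj}=D_{ij}(E_{ii})$ and $E_{ii}(E_{ii}y)E_{jj}=E_{ii}yE_{jj}=D_{ij}(E_{jj})$, using $E_{ii}^2=E_{ii}$ and $E_{jj}^2=E_{jj}$. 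This leaves precisely $D_{ij}(E_{ii})+D_{ij}(E_{jj})=0$, as claimed.

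Since every step is a direct manipulation of idempotents, I do not expect a genuine obstacle here; the only point requiring care is the bookkeeping in the subtraction and in the subsequent left/right multiplication, where one must apply the orthogonality of $E_{ii}$ and $E_{jj}$ to exactly the right factors so that the two unwanted cross terms are annihilated while the two desired ones survive.
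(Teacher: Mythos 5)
Your proposal is correct. It reaches the same central identity as the paper, namely
\[
D(E_{ii})E_{jj}+E_{ii}D(E_{jj})+E_{jj}D(E_{ii})+D(E_{jj})E_{ii}=0,
\]
but you justify it more carefully: the paper simply asserts this from $E_{ii}E_{jj}+E_{jj}E_{ii}=0$ (implicitly using the linearized form of the Jordan identity), whereas you derive it explicitly by polarization, applying $D(a^2)=D(a)a+aD(a)$ to $E_{ii}$, $E_{jj}$ and the idempotent $E_{ii}+E_{jj}$ and subtracting. Where the two arguments genuinely diverge is in extracting the conclusion: the paper invokes Lemma~\ref{1} to expand $D(E_{ii})$ and $D(E_{jj})$ into their $T_{kt}$-components and then identifies which summands survive, while you bypass that structural input entirely by sandwiching the four-term identity between $E_{ii}$ on the left and $E_{jj}$ on the right and using only orthogonality and idempotency of the $E_{kk}$. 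Your route is shorter, self-contained (it does not depend on Lemma~\ref{1}), and works symmetrically in $i$ and $j$ without the paper's reduction to $i<j$; the paper's route buys nothing extra here beyond consistency with the componentwise bookkeeping it uses throughout.
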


\begin{proof} With no loss of generality, we may assume that $i< j$. As $E_{ii}E_{jj}+E_{jj}E_{ii}=0$, we find that $$D(E_{ii})E_{jj}+E_{ii}D(E_{jj})+E_{jj}D(E_{ii})+D(E_{jj})E_{ii}=0.$$ Applying now Lemma \ref{1}, we receive that $$[\sum_{k=1}^{i-1}D_{ki}(E_{ii})+\sum_{t=i+1}^nD_{it}(E_{ii})]E_{jj}+E_{ii}[\sum_{k=1}^{j-1}D_{kj}(E_{jj})+\sum_{t=j+1}^nD_{jt}(E_{jj})]+$$$$E_{jj}[\sum_{k=1}^{i-1}D_{ki}(E_{ii})+
\sum_{t=i+1}^nD_{it}(E_{ii})]+[\sum_{k=1}^{j-1}D_{kj}(E_{jj})+\sum_{t=j+1}^nD_{jt}(E_{jj})]E_{ii}=0.$$ Further, since $i< j$, we have $\sum_{k=1}^{i-1}D_{ki}(E_{ii})E_{jj}=0$, $E_{ii}\sum_{t=j+1}^nD_{jt}(E_{jj})=0$, $E_{jj}[\sum_{k=1}^{i-1}D_{ki}(E_{ii})+\sum_{t=i+1}^nD_{it}(E_{ii})]=0$, and $[\sum_{k=1}^{j-1}D_{kj}(E_{jj})+\sum_{t=j+1}^nD_{jt}(E_{jj})]E_{ii}=0$. It, consequently, follows that $$\sum_{t=i+1}^nD_{it}(E_{ii})E_{jj}+E_{ii} \sum_{k=1}^{j-1}D_{kj}(E_{jj})=0.$$ This, however, means that $D_{ij}(E_{ii})+D_{ij}(E_{jj})=0$, as stated.
\end{proof}

\begin{lemma} Let $D : T \longrightarrow T$ be a linear map. If $D$ is a Jordan derivation, then $D_{sj}(m_{ij})= D_{si}(E_{ii})\otimes m_{ij}$, for each $s\leq i-1$ and $m_{ij}\in T_{ij}$, $i< j$.
\end{lemma}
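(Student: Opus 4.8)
The plan is to run the exact left-handed mirror of the preceding lemma, now pairing $m_{ij}$ with $E_{ii}$ in place of $E_{jj}$. Since $i<j$, the diagonal idempotents act on $m_{ij}$ by $E_{ii}m_{ij}=m_{ij}$ and $m_{ij}E_{ii}=0$, so the Jordan (symmetric) product of $E_{ii}$ and $m_{ij}$ collapses to
$$E_{ii}m_{ij}+m_{ij}E_{ii}=m_{ij}.$$
First I would feed this identity into the linearized Jordan-derivation rule $D(ab+ba)=D(a)b+aD(b)+D(b)a+bD(a)$ with $a=E_{ii}$ and $b=m_{ij}$, which produces
$$D(m_{ij})=D(E_{ii})m_{ij}+E_{ii}D(m_{ij})+D(m_{ij})E_{ii}+m_{ij}D(E_{ii}).$$

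Next I would project onto the $(s,j)$-corner by forming $D_{sj}(m_{ij})=E_{ss}D(m_{ij})E_{jj}$ for a fixed $s\le i-1$ and examine the four resulting summands one at a time. Three of them drop out for purely positional reasons: the term $E_{ss}E_{ii}D(m_{ij})E_{jj}$ vanishes because $s<i$ forces $E_{ss}E_{ii}=0$; the term $E_{ss}D(m_{ij})E_{ii}E_{jj}$ vanishes because $i<j$ forces $E_{ii}E_{jj}=0$; and the term $E_{ss}m_{ij}D(E_{ii})E_{jj}$ vanishes because $m_{ij}$ is supported in row $i$ while $s\ne i$, so $E_{ss}m_{ij}=0$. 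Only the first summand survives the projection.

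For that surviving term I would invoke Lemma \ref{1} to write $D(E_{ii})=\sum_{k=1}^{i-1}D_{ki}(E_{ii})+\sum_{t=i+1}^nD_{it}(E_{ii})$ and then track indices carefully: left-multiplication by $E_{ss}$ retains only the block in row $s$, killing every $D_{it}(E_{ii})$ (whose row is $i\ne s$) and leaving $D_{si}(E_{ii})\in T_{si}$; right-multiplication of this by $m_{ij}\in T_{ij}$ is a legitimate module product landing in $T_{sj}$, which is in turn preserved by the trailing $E_{jj}$. This gives $E_{ss}D(E_{ii})m_{ij}E_{jj}=D_{si}(E_{ii})\otimes m_{ij}$, and hence $D_{sj}(m_{ij})=D_{si}(E_{ii})\otimes m_{ij}$, as claimed. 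I do not expect a genuine obstacle here, since the argument is structurally dual to the previous lemma; the only step demanding attention is the bookkeeping in this final term, namely verifying that among the blocks of $D(E_{ii})$ only the $T_{si}$ piece can be chained through $m_{ij}$ into the $(s,j)$-corner, so that no spurious contributions from the $\sum_{t>i}D_{it}(E_{ii})$ part leak in.
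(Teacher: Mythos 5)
Your proof is correct and follows essentially the same route as the paper: the identity $m_{ij}=E_{ii}m_{ij}+m_{ij}E_{ii}$, the linearized Jordan rule, and the block decomposition of $D(E_{ii})$ from Lemma \ref{1}. Your variant of projecting with $E_{ss}(\cdot)E_{jj}$ \emph{before} expanding is a slightly cleaner bookkeeping device (it kills three of the four terms for purely positional reasons, without needing the decomposition of $D(m_{ij})$ from Lemma \ref{2}), but the mathematical content is identical.
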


\begin{proof} As $i\neq j$, we can get $m_{ij}=E_{ii}m_{ij}+m_{ij}E_{ii}$. So, $$D(m_{ij})= D(E_{ii})\otimes m_{ij}+E_{ii}D(m_{ij})+D(m_{ij})E_{ii}+m_{ij}\otimes D(E_{ii}).$$
Besides, $$D(m_{ij})=(\sum_{k=1}^{i-1}D_{ki}(E_{ii})+\sum_{t=i+1}^nD_{it}(E_{ii}))\otimes m_{ij}+E_{ii}(\sum_{k=1}^{i}D_{kj}(m_{ij})+\sum_{t=j+1}^nD_{it}(m_{ij}))+$$
$$(\sum_{k=1}^{i}D_{kj}(m_{ij})+\sum_{t=j+1}^nD_{it}(m_{ij}))E_{ii}+m_{ij}(\sum_{k=1}^{i-1}D_{ki}(E_{ii})+\sum_{t=i+1}^nD_{it}(E_{ii}))$$ $$=(\sum_{k=1}^{i-1}D_{ki}(E_{ii}))\otimes m_{ij}+D_{ij}(m_{ij})+\sum_{t=j+1}^nD_{it}(m_{ij}).$$ Thus, $D_{sj}(m_{ij})= D_{si}(E_{ii})\otimes m_{ij}$, for each $s\leq i-1$ and $m_{ij}\in T_{ij}$, $i< j $, as claimed.
\end{proof}

As a consequence, we yield:

\begin{corollary}\label{cor} Let $D : T \longrightarrow T$ be an $R$-linear map. If $D$ is a Jordan derivation, then $$D(m_{ij})=(\sum_{k=1}^{i-1}D_{ki}(E_{ii})\otimes m_{ij})+D_{ij}(m_{ij})+(m_{ij}\otimes \sum_{t=j+1}^nD_{jt}(E_{jj})),\ \text{for all}\ i< j.$$
\end{corollary}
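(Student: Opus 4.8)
The plan is to assemble the claimed identity directly from the decomposition recorded immediately after Lemma \ref{2}, namely
$$D(m_{ij})=\sum_{k=1}^{i}D_{kj}(m_{ij})+\sum_{t=j+1}^nD_{it}(m_{ij}),$$
combined with the two lemmas preceding this corollary. The guiding observation is that each of the two sums consists only of off-``diagonal'' components that the earlier lemmas have already identified as bi-module products $\otimes$, while the single surviving ``diagonal'' term is exactly $D_{ij}(m_{ij})$.

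First I would peel off the top index from the first sum, writing $\sum_{k=1}^{i}D_{kj}(m_{ij})=\left(\sum_{k=1}^{i-1}D_{kj}(m_{ij})\right)+D_{ij}(m_{ij})$. For the range $k\leq i-1$, the lemma asserting $D_{sj}(m_{ij})=D_{si}(E_{ii})\otimes m_{ij}$ rewrites each summand, so that $\sum_{k=1}^{i-1}D_{kj}(m_{ij})=\sum_{k=1}^{i-1}D_{ki}(E_{ii})\otimes m_{ij}$. This reproduces the first block of the desired expression, the term $D_{ij}(m_{ij})$ supplying the middle block verbatim.

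Next I would treat the second sum. For every $t\geq j+1$, the lemma asserting $D_{is}(m_{ij})=m_{ij}\otimes D_{js}(E_{jj})$ gives $D_{it}(m_{ij})=m_{ij}\otimes D_{jt}(E_{jj})$, hence $\sum_{t=j+1}^nD_{it}(m_{ij})=\sum_{t=j+1}^n m_{ij}\otimes D_{jt}(E_{jj})$. Invoking the $R_j$-linearity of the bi-module multiplication $M_{ij}\otimes_{R_j}M_{jt}\longrightarrow M_{it}$, I can factor $m_{ij}$ through the summation to obtain $m_{ij}\otimes\sum_{t=j+1}^n D_{jt}(E_{jj})$, which is the third block. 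Adding the three contributions yields precisely the stated formula.

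The argument presents no genuine obstacle: every step is a substitution of an already established lemma followed by a rearrangement of finite sums, with the empty-sum conventions (when $i=1$ or $j=n$) handling the boundary cases automatically. The one point deserving care is the final use of linearity of the bi-module pairing to pull $m_{ij}$ out of the summation; since each $D_{jt}(E_{jj})$ lies in the appropriate component and the resulting products land in distinct blocks $T_{it}$, this factorization is unambiguous and the index ranges match exactly.
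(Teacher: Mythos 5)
Your proposal is correct and is exactly the argument the paper intends: the corollary is stated there without proof as an immediate consequence of the decomposition $D(m_{ij})=\sum_{k=1}^{i}D_{kj}(m_{ij})+\sum_{t=j+1}^nD_{it}(m_{ij})$ together with the two preceding lemmas, which is precisely how you assemble it. Your extra remark about factoring $m_{ij}$ out of the sum via linearity of the bi-module pairing is a reasonable point of care, but it introduces nothing beyond what the paper's implicit reasoning already requires.
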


We are now prepared to establish the following assertion.

\begin{lemma}\label{11} Let $D : T \longrightarrow T$ be a linear map. If $D$ is a Jordan derivation, then
$$D(m_{ij}\otimes n_{jt})= D (m_{ij})\otimes n_{jt}+m_{ij}\otimes D (n_{jt}),$$ for each $1\leq i\leq j< t\leq n$ and $m_{ij}\in T_{ij}, n_{jl}\in T_{jl}$.
\end{lemma}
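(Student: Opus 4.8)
The plan is to apply the polarized form of the Jordan identity to the pair $(m_{ij},n_{jt})$ and then to discard the two unwanted cross terms by a block-position argument driven by the single inequality $i<t$. First I would record the polarization obtained from expanding $D\big((a+b)^2\big)$ and subtracting $D(a^2)$ and $D(b^2)$: for all $a,b\in T$,
$$D(ab+ba)=D(a)b+aD(b)+D(b)a+bD(a).$$
Specializing $a=m_{ij}$ and $b=n_{jt}$, I would note that the reverse product vanishes: since $n_{jt}\in T_{jt}$ and $m_{ij}\in T_{ij}$, the product $n_{jt}\otimes m_{ij}$ lies in $T_{jt}\cdot T_{ij}$, which is nonzero only when $t=i$, whereas the hypothesis $i\le j<t$ forces $i<t$. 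Hence $m_{ij}\otimes n_{jt}+n_{jt}\otimes m_{ij}=m_{ij}\otimes n_{jt}$, and the polarization becomes
$$D(m_{ij}\otimes n_{jt})=D(m_{ij})\otimes n_{jt}+m_{ij}\otimes D(n_{jt})+D(n_{jt})\otimes m_{ij}+n_{jt}\otimes D(m_{ij}).$$

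Next I would kill the last two summands. To handle $n_{jt}\otimes D(m_{ij})$ I would locate the blocks of $D(m_{ij})$: for $i<j$, Lemma \ref{2} gives $D(m_{ij})\in T_{1j}+\cdots+T_{ij}+T_{ij+1}+\cdots+T_{in}$, while for $i=j$ the diagonal element $m_{jj}$ is covered by Lemma \ref{112}, yielding $D(m_{jj})\in T_{1j}+\cdots+T_{jj}+T_{jj+1}+\cdots+T_{jn}$. In both cases every block $T_{ab}$ appearing in $D(m_{ij})$ has row index $a\le i<t$. Since left multiplication by $n_{jt}\in T_{jt}$ retains only the blocks sitting in row $t$, it follows that $n_{jt}\otimes D(m_{ij})=0$. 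Symmetrically, Lemma \ref{2} places $D(n_{jt})$ in $T_{1t}+\cdots+T_{jt}+T_{jt+1}+\cdots+T_{jn}$, so every block occurring there has column index at least $t>i$; since right multiplication by $m_{ij}\in T_{ij}$ retains only the blocks in column $i$, we get $D(n_{jt})\otimes m_{ij}=0$.

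Substituting these two vanishing terms back into the displayed polarization leaves precisely
$$D(m_{ij}\otimes n_{jt})=D(m_{ij})\otimes n_{jt}+m_{ij}\otimes D(n_{jt}),$$
which is the assertion. I expect the only real obstacle to be the careful bookkeeping behind the two vanishing statements: one must read off correctly from Lemmas \ref{2} and \ref{112} the admissible positions of the blocks of $D(m_{ij})$ and $D(n_{jt})$, and then check that left multiplication by $n_{jt}$ (respectively right multiplication by $m_{ij}$) annihilates each of them, the decisive numerical input throughout being $i<t$. The boundary case $i=j$, in which $m_{ij}$ is a diagonal entry and Lemma \ref{2} no longer applies, is the single spot requiring Lemma \ref{112} instead, and I would flag it explicitly.
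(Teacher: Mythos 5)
Your proposal is correct and follows essentially the same route as the paper: polarize the Jordan identity at the pair $(m_{ij},n_{jt})$, observe that $n_{jt}\otimes m_{ij}=0$ because $i<t$, and then annihilate the two cross terms by reading off the block positions of $D(m_{ij})$ and $D(n_{jt})$ from the earlier lemmas. Your explicit treatment of the boundary case $i=j$ via Lemma~\ref{112} is a small point of extra care that the paper's own proof passes over silently, but it does not change the argument.
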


\begin{proof} Since $i\leq j< t$, the definition of generalized triangular matrix rings enables us that $T_{ti}=0$, which implies that $n_{jt}\otimes m_{ij}= 0$. As $D$ is a Jordan derivation, we may write $$D(m_{ij}\otimes n_{jt})=D(m_{ij}\otimes n_{jt}+n_{jt}\otimes m_{ij})=$$$$D(m_{ij})\otimes n_{jt}+m_{ij}\otimes D(n_{jt})+D(n_{jt})\otimes m_{ij}+n_{jt}\otimes D(m_{ij}).$$ However, as $$D(n_{jt})\in T_{1t}+\cdots+T_{jt}+T_{jt+1}+\cdots+ T_{jn}$$ and $i\leq j< t$, we obtain $D(n_{jt})\otimes m_{ij}=0$. In the same way, we have $n_{jt}\oplus D(m_{ij})=0$. Finally, $$D(m_{ij}\otimes n_{jt})=D(m_{ij})\otimes n_{jt}+m_{ij}\otimes D(n_{jt}),$$ as asserted.
\end{proof}

Three more valuable consequences are these:

\begin{corollary} Let $D : T \longrightarrow T$ be a linear map. If $D$ is a Jordan derivation, then
$D_{it}(m_{ij}\otimes n_{jt})= D_{ij}(m_{ij})\otimes n_{jt}+m_{ij}\otimes D_{jt}(n_{jt})$, for each $1\leq i< j< t\leq n$ and $m_{ij}\in T_{ij}, n_{jl}\in T_{jl}$.
\end{corollary}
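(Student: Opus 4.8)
The plan is to obtain this corner-wise identity by simply projecting the full Jordan-derivation identity of Lemma~\ref{11} onto the $(i,t)$-corner, that is, by left-multiplying by $E_{ii}$ and right-multiplying by $E_{tt}$. Recall that by definition $D_{it}(A)=E_{ii}D(A)E_{tt}$, so applying this operator to $A:=m_{ij}\otimes n_{jt}$ and invoking Lemma~\ref{11} gives
$$D_{it}(m_{ij}\otimes n_{jt})=E_{ii}\big(D(m_{ij})\otimes n_{jt}\big)E_{tt}+E_{ii}\big(m_{ij}\otimes D(n_{jt})\big)E_{tt}.$$
It then only remains to identify each of the two surviving pieces with the claimed summands $D_{ij}(m_{ij})\otimes n_{jt}$ and $m_{ij}\otimes D_{jt}(n_{jt})$.

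First I would handle the term $E_{ii}\big(D(m_{ij})\otimes n_{jt}\big)E_{tt}$. Since $n_{jt}\in T_{jt}$ is an honest corner element, we have $n_{jt}=E_{jj}n_{jt}E_{tt}$; in particular $n_{jt}E_{tt}=n_{jt}$ and $n_{jt}=E_{jj}n_{jt}$. The latter lets me insert the middle idempotent $E_{jj}$ between $D(m_{ij})$ and $n_{jt}$, so that $E_{ii}D(m_{ij})\otimes n_{jt}=E_{ii}D(m_{ij})E_{jj}\otimes n_{jt}=D_{ij}(m_{ij})\otimes n_{jt}$, which already lies in $T_{it}$ and is therefore unaffected by the trailing $E_{tt}$. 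Symmetrically, for the term $E_{ii}\big(m_{ij}\otimes D(n_{jt})\big)E_{tt}$, the relation $m_{ij}=E_{ii}m_{ij}E_{jj}$ gives $E_{ii}m_{ij}=m_{ij}$ and permits inserting $E_{jj}$ on the right of $m_{ij}$, producing $m_{ij}\otimes E_{jj}D(n_{jt})E_{tt}=m_{ij}\otimes D_{jt}(n_{jt})$. Summing the two reduced pieces yields exactly the asserted formula.

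There is essentially no hard part here: the entire content is the bookkeeping of which projections $E_{kk}$ survive when the two ring products $D(m_{ij})\otimes n_{jt}$ and $m_{ij}\otimes D(n_{jt})$ are squeezed between $E_{ii}$ on the left and $E_{tt}$ on the right. The one point deserving a moment's care is the legitimacy of inserting the middle idempotent $E_{jj}$; this is valid precisely because $m_{ij}$ and $n_{jt}$ are corner elements of $T_{ij}$ and $T_{jt}$, so that $m_{ij}=E_{ii}m_{ij}E_{jj}$ and $n_{jt}=E_{jj}n_{jt}E_{tt}$, exactly the normalizations recorded earlier in the text. With these in hand, the corollary is an immediate specialization of Lemma~\ref{11}.
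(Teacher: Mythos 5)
Your proposal is correct and follows essentially the same route as the paper: both apply Lemma~\ref{11} and then use the corner normalizations $m_{ij}=E_{ii}m_{ij}E_{jj}$ and $n_{jt}=E_{jj}n_{jt}E_{tt}$ to insert the middle idempotent $E_{jj}$ and identify the two terms with $D_{ij}(m_{ij})\otimes n_{jt}$ and $m_{ij}\otimes D_{jt}(n_{jt})$. The only (immaterial) difference is that you project onto the $(i,t)$-corner before inserting $E_{jj}$, whereas the paper inserts the idempotents first and then multiplies by $E_{ii}$ and $E_{tt}$.
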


\begin{proof}
In view of Lemma \ref{11}, we write $$D (m_{ij}\otimes n_{jt})= D (m_{ij})\otimes n_{jt}+m_{ij}\otimes D (n_{jt}),$$ for every $1\leq i< j< t\leq n$ and $m_{ij}\in T_{ij}, n_{jl}\in T_{jl}$. As $m_{ij}=E_{ii}m_{ij}E_{jj}$ and $n_{jt}=E_{jj}n_{jt}E_{tt}$, we also write $$D (m_{ij}\otimes n_{jt})=D (m_{ij})\otimes E_{jj}n_{jt}E_{tt}+E_{ii}m_{ij}E_{jj}\otimes D (n_{jt})$$$$=D (m_{ij})E_{jj}\otimes n_{jt}E_{tt}+E_{ii}m_{ij}\otimes E_{jj}D (n_{jt}).$$ Thus, $$D_{it}(m_{ij}\otimes n_{jt})=E_{ii}D (m_{ij}\otimes n_{jt})E_{tt}=$$$$E_{ii}D (m_{ij})E_{jj}\otimes n_{jt}E_{tt}+E_{ii}m_{ij}\otimes E_{jj}D (n_{jt})E_{tt}$$$$=D_{ij} (m_{ij})\otimes n_{jt}+ m_{ij}\otimes  D _{jt}(n_{jt}),$$ concluding the arguments.
\end{proof}

\begin{corollary} Let $D : T \longrightarrow T$ be a linear map. If $D$ is a Jordan derivation, then
$D_{st}(m_{ij}\otimes n_{jt})= D_{sj}(m_{ij})\otimes n_{jt}$, for each $1\leq s< i< j< t\leq n$ and $m_{ij}\in T_{ij}, n_{jl}\in T_{jl}$.
\end{corollary}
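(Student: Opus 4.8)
The plan is to follow the same template as the preceding corollary, differing only in the choice of corner projections and in the way the index constraint is exploited. The essential observation is that $m_{ij}\otimes n_{jt}$ lies in $T_{it}$, so that $D_{st}(m_{ij}\otimes n_{jt})=E_{ss}D(m_{ij}\otimes n_{jt})E_{tt}$ is a legitimate component (note that $s<t$ since $s<i<j<t$), and that Lemma \ref{11} already resolves $D(m_{ij}\otimes n_{jt})$ into a sum of two terms. Applying the projections to that expression and discarding the summand forced to vanish will produce the asserted identity.

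First I would invoke Lemma \ref{11} to write
$$D(m_{ij}\otimes n_{jt})=D(m_{ij})\otimes n_{jt}+m_{ij}\otimes D(n_{jt}).$$
Next, using $m_{ij}=E_{ii}m_{ij}E_{jj}$ and $n_{jt}=E_{jj}n_{jt}E_{tt}$ exactly as in the previous corollary, I would rewrite the right-hand side as
$$D(m_{ij})E_{jj}\otimes n_{jt}E_{tt}+E_{ii}m_{ij}\otimes E_{jj}D(n_{jt}),$$
and then sandwich both sides between $E_{ss}$ and $E_{tt}$ to obtain
$$D_{st}(m_{ij}\otimes n_{jt})=E_{ss}D(m_{ij})E_{jj}\otimes n_{jt}E_{tt}+E_{ss}E_{ii}m_{ij}\otimes E_{jj}D(n_{jt})E_{tt}.$$

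The decisive, and essentially the only nonroutine, step is to observe that the hypothesis $s<i$ gives $E_{ss}E_{ii}=0$, so the second summand disappears entirely; this is precisely where the present statement departs from the preceding corollary, in which the choice $s=i$ kept both terms alive. For the surviving term, $E_{ss}D(m_{ij})E_{jj}$ is by definition $D_{sj}(m_{ij})$ and $n_{jt}E_{tt}=n_{jt}$, whence I would conclude $D_{st}(m_{ij}\otimes n_{jt})=D_{sj}(m_{ij})\otimes n_{jt}$, as claimed. I do not anticipate a genuine obstacle: the computation is short once Lemma \ref{11} is in hand, and the only points requiring care are keeping track of which corner projections annihilate which factors and using the strict triangularity relation $s<i$ to remove the $m_{ij}\otimes D(n_{jt})$ contribution.
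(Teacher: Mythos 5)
Your proof is correct and takes essentially the same route as the paper: the paper states this corollary without a written proof, but the immediately preceding corollary is proved by exactly the computation you describe (apply Lemma \ref{11}, insert the idempotents $m_{ij}=E_{ii}m_{ij}E_{jj}$ and $n_{jt}=E_{jj}n_{jt}E_{tt}$, then project by $E_{ss}(\cdot)E_{tt}$). Your added observation that $E_{ss}E_{ii}=0$ for $s<i$ annihilates the $m_{ij}\otimes D(n_{jt})$ summand is precisely the one extra step the present statement requires.
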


\begin{corollary} Let $D : T \longrightarrow T$ be a linear map. If $D$ is a Jordan derivation, then
$D_{ik}(m_{ij}\otimes n_{jt})=  m_{ij}\otimes D_{jk}(n_{jt})$, for each $1\leq i< j< t< k\leq n$ and $m_{ij}\in T_{ij}, n_{jl}\in T_{jl}$.
\end{corollary}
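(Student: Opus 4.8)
The plan is to mirror the computation of the preceding corollary, except that now I project onto the $(i,k)$-entry and exploit the constraint $t<k$ (rather than $s<i$) to annihilate the unwanted term. Since $1\le i<j<t$, Lemma \ref{11} applies and yields the decomposition
$$D(m_{ij}\otimes n_{jt})=D(m_{ij})\otimes n_{jt}+m_{ij}\otimes D(n_{jt}).$$
Applying the map $D_{ik}(-)=E_{ii}(-)E_{kk}$ to both sides then reduces the claim to controlling the two contributions $E_{ii}[D(m_{ij})\otimes n_{jt}]E_{kk}$ and $E_{ii}[m_{ij}\otimes D(n_{jt})]E_{kk}$ separately.

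For the first contribution I would use that $n_{jt}=n_{jt}E_{tt}$ together with the hypothesis $t<k$, so that $E_{tt}E_{kk}=0$ forces $n_{jt}E_{kk}=0$; consequently $E_{ii}D(m_{ij})n_{jt}E_{kk}=0$ and this term drops out entirely. For the second contribution, the membership $m_{ij}\in T_{ij}$ gives the identities $E_{ii}m_{ij}=m_{ij}=m_{ij}E_{jj}$, which let me slide the idempotents through and rewrite
$$E_{ii}m_{ij}D(n_{jt})E_{kk}=m_{ij}E_{jj}D(n_{jt})E_{kk}=m_{ij}\otimes D_{jk}(n_{jt}),$$
where the final equality is just the definition $D_{jk}(n_{jt})=E_{jj}D(n_{jt})E_{kk}$. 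Adding the two contributions produces $D_{ik}(m_{ij}\otimes n_{jt})=m_{ij}\otimes D_{jk}(n_{jt})$, exactly as asserted.

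I expect no genuine obstacle here: the statement is essentially the \emph{right-hand} counterpart of the immediately preceding corollary, and the only point that requires attention is the bookkeeping of the chain of inequalities $i<j<t<k$. Concretely, one must verify that it is precisely the relation $t<k$ (and not, say, $i<j$) that kills the $D(m_{ij})$-term after projecting to the $(i,k)$-position, while the surviving $D(n_{jt})$-term reassembles through the definition of $D_{jk}$. Once this is observed, the verification is a short manipulation of matrix units, requiring neither the faithfulness nor any torsion hypothesis.
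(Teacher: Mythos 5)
Your argument is correct and is exactly the computation the paper intends: the corollary is stated without proof, but it is meant to follow from Lemma \ref{11} by sandwiching with $E_{ii}(-)E_{kk}$, just as in the proof of the first corollary after that lemma. Your observation that $n_{jt}E_{kk}=n_{jt}E_{tt}E_{kk}=0$ (from $t<k$) kills the $D(m_{ij})$-term, while $E_{ii}m_{ij}E_{jj}=m_{ij}$ reassembles the surviving term into $m_{ij}\otimes D_{jk}(n_{jt})$, is precisely the right bookkeeping, and you are right that no faithfulness or torsion hypothesis is needed.
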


We now continue with the following helpful statements.

\begin{lemma}\label{12} Let $D : T \longrightarrow T$ be a linear map. If $D$ is a Jordan derivation, then
$D_{ii+k}(m_{i+ti+k})\otimes m_{i+kj}=D_{ij}( m_{i+ti+k}\otimes m_{i+kj})$, for every $1\leq i <i+t\leq i+k< j \leq n$ and $m_{i+ti+k}\in T_{i+ti+k}, m_{i+kj}\in T_{i+kj}$.
\end{lemma}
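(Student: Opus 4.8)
The plan is to deduce this identity directly from Lemma~\ref{11} by compressing both sides with the diagonal idempotents $E_{ii}$ and $E_{jj}$. Put $a:=m_{i+t\,i+k}\in T_{i+t\,i+k}$ and $b:=m_{i+k\,j}\in T_{i+k\,j}$. The crucial observation is that the triple $(i+t,\,i+k,\,j)$ satisfies the hypothesis of Lemma~\ref{11}: we have $i+t\leq i+k$ because $t\leq k$, while $i+k<j$ by assumption, so $i+t\leq i+k<j$. Applying Lemma~\ref{11} to the pair $(a,b)$ therefore gives
$$D(a\otimes b)=D(a)\otimes b+a\otimes D(b).$$

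Next I would compress this equality by the map $D_{ij}(\,\cdot\,)=E_{ii}(\,\cdot\,)E_{jj}$. For the first summand, using $b=E_{i+k\,i+k}\,b=b\,E_{jj}$ I would rewrite
$$E_{ii}\bigl(D(a)\otimes b\bigr)E_{jj}=E_{ii}\,D(a)\,b=E_{ii}\,D(a)\,E_{i+k\,i+k}\,b=\bigl(E_{ii}D(a)E_{i+k\,i+k}\bigr)\otimes b=D_{i\,i+k}(a)\otimes b,$$
which is exactly the left-hand side of the assertion. For the second summand I would use that $a=E_{i+t\,i+t}\,a\,E_{i+k\,i+k}$, so $E_{ii}a=E_{ii}E_{i+t\,i+t}\,a\,E_{i+k\,i+k}=0$, the annihilation being forced by $i\neq i+t$; this in turn holds because the hypothesis $i<i+t$ means $t\geq 1$. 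Hence $E_{ii}\bigl(a\otimes D(b)\bigr)E_{jj}=0$, and combining the two computations yields $D_{ij}(a\otimes b)=D_{i\,i+k}(a)\otimes b$, as desired.

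Once Lemma~\ref{11} is available the argument is essentially bookkeeping, so there is no genuine structural obstacle; the delicate points are purely index-theoretic. The first is to verify the chain $i+t\leq i+k<j$ so that Lemma~\ref{11} is legitimately invoked—note that this even permits the degenerate case $t=k$, in which $a$ lies in the diagonal block $T_{i+k\,i+k}$, a situation Lemma~\ref{11} still covers since its first two indices are allowed to coincide. The second, and the step I expect to be most error-prone, is the vanishing of the cross term: it rests entirely on $E_{ii}a=0$, i.e.\ on the strict inequality $i<i+t$, so I would take care to confirm that this strictness is part of the stated hypotheses before concluding.
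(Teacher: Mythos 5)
Your proof is correct and follows essentially the same route as the paper: both expand the Jordan identity applied to $ab+ba$ with $ba=0$ and then compress with $E_{ii}(\cdot)E_{jj}$, using $E_{ii}m_{i+t\,i+k}=0$ (from $i<i+t$) to kill the cross term and the block structure of $D(a)$ to isolate $D_{i\,i+k}(a)$. The only cosmetic difference is that you invoke Lemma~\ref{11} for the expansion, whereas the paper repeats that computation inline; your index check $i+t\leq i+k<j$ justifying that invocation is valid.
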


\begin{proof} Since $j> i+t$, it must be that $m_{i+kj}\otimes m_{i+ti+k}=0$, and so $$D(m_{i+ti+k}\otimes m_{i+kj})=D(m_{i+ti+k}\otimes m_{i+kj}+m_{i+kj}\otimes m_{i+ti+k})=$$$$D(m_{i+ti+k})\otimes m_{i+kj}+m_{i+ti+k}\otimes D(m_{i+kj})$$$$+D(m_{i+kj})\otimes m_{i+ti+k}+m_{i+kj}\otimes D(m_{i+ti+k}).$$ It follows that $$E_{ii}D(m_{i+ti+k}\otimes m_{i+kj})E_{jj}=E_{ii}D(m_{i+ti+k})\otimes m_{i+kj}E_{jj}+E_{ii}m_{i+ti+k}\otimes D(m_{i+kj})E_{jj}$$$$+E_{ii}D(m_{i+kj})\otimes m_{i+ti+k}E_{jj}+E_{ii}m_{i+kj}\otimes D(m_{i+ti+k})E_{jj}.$$ As $E_{ii}m_{i+ti+k} = m_{i+ti+k}E_{jj}=E_{ii}m_{i+kj}=0$, we have $$D_{ij}(m_{i+ti+k}\otimes m_{i+kj})=E_{ii}D(m_{i+ti+k}\otimes m_{i+kj})E_{jj}$$$$=E_{ii}D(m_{i+ti+k})\otimes m_{i+kj}E_{jj}=E_{ii}D(m_{i+ti+k})\otimes m_{i+kj}.$$

On the other hand, it is easy to see that $E_{ii}D_{st}(m_{i+ti+k})\otimes m_{i+kj}=0$, for each $s\neq i$ and $t\neq i+k$. Finally, $D_{ij}( m_{i+ti+k}\otimes m_{i+kj})=D_{ii+k}(m_{i+ti+k})\otimes m_{i+kj}$, as suspected.
\end{proof}

\begin{lemma}\label{33} Let $D : T \longrightarrow T$ be a linear map. If $D$ is a Jordan derivation, then
$D_{ij}(m_{i k}\otimes m_{ kt})=m_{i k}\otimes D_{kj}(m_{ kt})$, for every $1\leq i \leq k\leq t< j \leq n$ and $m_{ik}\in T_{ik}, m_{kt}\in T_{kt}$.
\end{lemma}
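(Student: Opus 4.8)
The plan is to follow the template established in the proof of Lemma \ref{12}. Note that $m_{ik}=E_{ii}m_{ik}E_{kk}$ and $m_{kt}=E_{kk}m_{kt}E_{tt}$, and that the product $m_{ik}\otimes m_{kt}$ lies in $T_{it}$. The first point is to record that the reversed product vanishes: since $i\leq k\leq t<j$, the indices of $m_{kt}$ and $m_{ik}$ do not compose (a nonzero product would force the column index $t$ of $m_{kt}$ to equal the row index $i$ of $m_{ik}$), so by the definition of a generalized triangular matrix ring one has $m_{kt}\otimes m_{ik}=0$. Consequently, applying the Jordan identity and discarding the reversed product exactly as in Lemma \ref{12}, we obtain
$$D(m_{ik}\otimes m_{kt})=D(m_{ik})\otimes m_{kt}+m_{ik}\otimes D(m_{kt})+D(m_{kt})\otimes m_{ik}+m_{kt}\otimes D(m_{ik}).$$

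Next I would compress this identity with $E_{ii}(\,\cdot\,)E_{jj}$ and dispose of the four terms one at a time. Two of them die for a uniform reason: since $t<j$ we have $m_{kt}E_{jj}=E_{kk}m_{kt}E_{tt}E_{jj}=0$, which kills the term coming from $D(m_{ik})\otimes m_{kt}$; and since $k\leq t<j$ we have $m_{ik}E_{jj}=E_{ii}m_{ik}E_{kk}E_{jj}=0$, which kills the term coming from $D(m_{kt})\otimes m_{ik}$. The surviving ``good'' term collapses, using $E_{ii}m_{ik}=m_{ik}=E_{ii}m_{ik}E_{kk}$, to
$$E_{ii}\bigl(m_{ik}\otimes D(m_{kt})\bigr)E_{jj}=m_{ik}\otimes E_{kk}D(m_{kt})E_{jj}=m_{ik}\otimes D_{kj}(m_{kt}),$$
which is precisely the asserted right-hand side. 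It then remains only to show that the last term $E_{ii}\bigl(m_{kt}\otimes D(m_{ik})\bigr)E_{jj}$ is zero.

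The disposal of this final term is where the argument branches, and I expect it to be the main obstacle. When $i<k$, the orthogonality $E_{ii}E_{kk}=0$ gives $E_{ii}m_{kt}=E_{ii}E_{kk}m_{kt}E_{tt}=0$ at once. When $i=k$, however, $E_{ii}m_{kt}=m_{kt}\neq 0$, and one must instead invoke Lemma \ref{112}: here $m_{ik}=m_{ii}\in R_i$ is diagonal, so $D(m_{ii})$ has nonzero blocks only in the $i$-th row and the $i$-th column, whence the block $D_{tj}(m_{ii})=E_{tt}D(m_{ii})E_{jj}$ vanishes (as $t>i$ and $j>i$). Thus $E_{ii}\bigl(m_{kt}\otimes D(m_{ik})\bigr)E_{jj}=m_{it}\otimes D_{tj}(m_{ii})=0$. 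Assembling the three surviving evaluations yields $D_{ij}(m_{ik}\otimes m_{kt})=m_{ik}\otimes D_{kj}(m_{kt})$, as claimed.

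The one genuinely delicate point I would flag is the fully degenerate diagonal case $i=k=t$, in which $m_{kt}\otimes m_{ik}$ need not vanish and the opening reduction fails, since the Jordan identity then only delivers the symmetrized relation $D_{ij}(ab+ba)=a\otimes D_{ij}(b)+b\otimes D_{ij}(a)$ for $a,b\in R_i$. This case must either be excluded by the intended range of the indices or handled separately; every other configuration permitted by $i\leq k\leq t<j$ is covered by the branching above.
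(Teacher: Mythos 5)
Your proposal follows essentially the same route as the paper: observe that the reversed product $m_{kt}\otimes m_{ik}$ vanishes, expand $D(m_{ik}\otimes m_{kt}+m_{kt}\otimes m_{ik})$ by the Jordan identity, compress with $E_{ii}(\,\cdot\,)E_{jj}$, and kill three of the four terms, leaving $m_{ik}\otimes D_{kj}(m_{kt})$. You are in fact more careful than the paper on the term $E_{ii}\bigl(m_{kt}\otimes D(m_{ik})\bigr)E_{jj}$ in the subcase $i=k<t$, where the orthogonality trick fails and one must invoke the block structure of $D$ on diagonal entries from Lemma \ref{112}; the paper passes over this silently. The degenerate case $i=k=t$ that you flag is a genuine issue in the paper as well: its proof opens with ``since $t>i$'' even though the stated range $i\leq k\leq t$ allows $t=i$, and that case does appear to be used in Lemma \ref{major} (for terms of the form $a_{ii}D_{ij}(b_{ii})$), so it would need to be excluded from the statement or treated separately, e.g.\ via the faithfulness hypotheses as in Lemma \ref{d3}.
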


\begin{proof} Since $t> i$, it must be that $m_{kt}\otimes m_{ik}=0$, and thus $$D(m_{ik}\otimes m_{kt})=D(m_{ik}\otimes m_{kt}+m_{kt}\otimes m_{ik})=$$$$D(m_{ik})\otimes m_{kt}+m_{ik}\otimes D(m_{kt})$$$$+D(m_{kt})\otimes m_{ik}+m_{kt}\otimes D(m_{ik}).$$ It follows that $$E_{ii}D(m_{ik}\otimes m_{kt})E_{jj}=E_{ii}D(m_{ik})\otimes m_{kt}E_{jj}+E_{ii}m_{ik}\otimes D(m_{kt})E_{jj}$$$$+E_{ii}D(m_{kt})\otimes m_{ik}E_{jj}+E_{ii}m_{kt}\otimes D(m_{ik})E_{jj}=E_{ii}m_{ik}\otimes D(m_{kt})E_{jj}.$$ It is elementarily to see that $D_{ij}(m_{i k}\otimes m_{ kt})=m_{i k}\otimes D_{kj}(m_{ kt})$, for each $1\leq i \leq k\leq t< j \leq n$ and $m_{ik}\in T_{ik}, m_{kt}\in T_{kt}$, finishing the argumentation.
\end{proof}

\begin{lemma}\label{d1} Let $D : T \longrightarrow T$ be a linear map. If $D$ is a Jordan derivation, then $D(a_{ii}m_{in})=D (a_{ii})m_{in}+a_{ii}D(m_{in})$, for every $m_{in}\in M_{in}$, $1\leq i \leq n-1$.
\end{lemma}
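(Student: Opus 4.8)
The plan is to exploit the standard polarization of the Jordan identity. Replacing $a$ by $a+b$ in $D(a^2)=D(a)a+aD(a)$ and cancelling the contributions of $D(a^2)$ and $D(b^2)$ yields the bilinear identity
$$D(ab+ba)=D(a)b+aD(b)+D(b)a+bD(a),\qquad a,b\in T.$$
I would apply this with $a:=a_{ii}\in T_{ii}$ and $b:=m_{in}\in T_{in}$, and then show that two of the four resulting terms vanish because of the triangular block structure.

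The first observation is that the reversed product disappears. Since $i\le n-1<n$, the multiplication rule of a generalized triangular matrix ring (namely $T_{pq}T_{rs}=0$ unless $q=r$) forces $T_{in}T_{ii}=0$, so $m_{in}a_{ii}=0$ and hence $a_{ii}m_{in}+m_{in}a_{ii}=a_{ii}m_{in}$. The polarization identity therefore reduces to
$$D(a_{ii}m_{in})=D(a_{ii})m_{in}+a_{ii}D(m_{in})+D(m_{in})a_{ii}+m_{in}D(a_{ii}),$$
so the whole task is to annihilate the last two terms.

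For the third term, Lemma \ref{2} (with $j=n$) places $D(m_{in})$ inside $T_{1n}+\cdots+T_{in}$, i.e.\ entirely in column $n$; right-multiplying such a matrix by $a_{ii}\in T_{ii}$ would require the column-row match $n=i$, which fails, so $D(m_{in})a_{ii}=0$. For the fourth term, Lemma \ref{112} puts $D(a_{ii})$ inside $T_{1i}+\cdots+T_{ii}+\cdots+T_{in}$, that is, in column $i$ (rows $\le i$) together with row $i$ (columns $\ge i$); left-multiplying by $m_{in}\in T_{in}$ then needs either $n=k$ with $k\le i$, or $n=i$, both impossible since $i<n$, whence $m_{in}D(a_{ii})=0$. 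Substituting these two vanishings gives exactly $D(a_{ii}m_{in})=D(a_{ii})m_{in}+a_{ii}D(m_{in})$.

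The argument is short, and the only place where genuine care is needed—the main, albeit mild, obstacle—is the bookkeeping of block positions: one must combine the precise index ranges supplied by Lemmas \ref{1}, \ref{112} and \ref{2} with the triangular multiplication rule, and keep track that it is the strict inequality $i<n$ which makes each unwanted product land in a zero block. Once the positions of $D(m_{in})$ and $D(a_{ii})$ are pinned down, no computation beyond matching a single index remains.
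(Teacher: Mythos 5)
Your proposal is correct and follows essentially the same route as the paper: both symmetrize to $D(a_{ii}m_{in}+m_{in}a_{ii})$ using $m_{in}a_{ii}=0$, expand via the Jordan identity, and kill the two unwanted terms by locating $D(m_{in})$ and $D(a_{ii})$ in the blocks given by Lemmas \ref{2} and \ref{112}. Your version merely spells out the block-index bookkeeping that the paper leaves implicit.
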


\begin{proof} Since $m_{in}a_{ii}=0$ for $1\leq i\leq n-1$, we write $$D(a_{ii} m_{in})=D(a_{ii} m_{in}+m_{in}a_{ii})=$$$$D(a_{ii})m_{in}+a_{ii}D(m_{in})+D(m_{in})a_{ii} +m_{in}D(a_{ii}),$$ $1\leq i\leq n-1$. Further, as $D(m_{in})\in T_{1n}+\cdots+T_{in}$, we obtain $D(m_{in})a_{ii} =0$. Analogically, $m_{in}D(a_{ii})=0$. So, $$D(a_{ii} m_{in})=
D(a_{ii})m_{in}+a_{ii} D(m_{in}),\ 1\leq i\leq n-1,$$ giving the evidence.
\end{proof}

\begin{lemma}\label{d2} Let $D : T \longrightarrow T$ be a linear map. If $D$ is a Jordan derivation, then $D(m_{1n}a_{nn})=D (m_{1n})a_{nn}+m_{1n}D(a_{nn})$, for every $m_{1n}\in M_{1n}$.
\end{lemma}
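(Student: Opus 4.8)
The plan is to run the exact right-handed mirror of the argument used for Lemma~\ref{d1}. The pivotal structural observation is that in a generalized triangular matrix ring one has $a_{nn}m_{1n}=0$: indeed $a_{nn}\in T_{nn}$ is supported at position $(n,n)$ while $m_{1n}\in T_{1n}$ is supported at position $(1,n)$, and the column index $n$ of the first factor cannot meet the row index $1$ of the second, so the product collapses. Consequently $m_{1n}a_{nn}+a_{nn}m_{1n}=m_{1n}a_{nn}$, which is precisely the shape needed to feed into the linearized Jordan identity.

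First I would linearize the Jordan condition $D(x^2)=D(x)x+xD(x)$ by polarization to obtain $D(xy+yx)=D(x)y+xD(y)+D(y)x+yD(x)$, and then specialize to $x=m_{1n}$, $y=a_{nn}$. Using the vanishing of $a_{nn}m_{1n}$ noted above, the left-hand side is just $D(m_{1n}a_{nn})$, and hence
$$D(m_{1n}a_{nn})=D(m_{1n})a_{nn}+m_{1n}D(a_{nn})+D(a_{nn})m_{1n}+a_{nn}D(m_{1n}).$$
The target identity will follow once the two cross terms $D(a_{nn})m_{1n}$ and $a_{nn}D(m_{1n})$ are shown to vanish.

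To kill those two terms I would invoke the earlier localization lemmas to pin down where $D(a_{nn})$ and $D(m_{1n})$ live. By Lemma~\ref{112} (taken with $i=n$, so that all the would-be entries $T_{n,n+1},\dots$ are vacuous), $D(a_{nn})$ is supported in the last column $T_{1n}+\cdots+T_{nn}$; multiplying such an element on the right by $m_{1n}$ forces a contraction of column $n$ against row $1$, which is empty, so $D(a_{nn})m_{1n}=0$. Dually, Lemma~\ref{2} with $i=1,\,j=n$ gives $D(m_{1n})\in T_{1n}$, so $a_{nn}D(m_{1n})$ asks column $n$ of $a_{nn}$ to meet row $1$ of $D(m_{1n})$, again impossible, whence $a_{nn}D(m_{1n})=0$. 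Substituting these into the displayed expansion yields $D(m_{1n}a_{nn})=D(m_{1n})a_{nn}+m_{1n}D(a_{nn})$, as claimed.

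I do not expect a genuine obstacle here, since the statement is the symmetric counterpart of Lemma~\ref{d1} with the roles of the left factor and the extremal index $n$ interchanged; the only point requiring care is the bookkeeping of matrix positions when confirming that the two cross terms vanish, and in particular checking that the boundary index $j=n$ really does truncate the supports of $D(a_{nn})$ and $D(m_{1n})$ to the single column and single cell, respectively, so that no stray nonzero contribution survives.
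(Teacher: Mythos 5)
Your proposal is correct and follows essentially the same route as the paper's own proof: linearize the Jordan identity at $m_{1n}a_{nn}+a_{nn}m_{1n}=m_{1n}a_{nn}$ and kill the two cross terms $D(a_{nn})m_{1n}$ and $a_{nn}D(m_{1n})$ using the support information $D(a_{nn})\in T_{1n}+\cdots+T_{nn}$ and $D(m_{1n})\in T_{1n}$ from the earlier localization lemmas. Your positional bookkeeping for why those products vanish is accurate, so nothing further is needed.
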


\begin{proof} Since $a_{nn}m_{1n}=0$, we get $$D(m_{1n}a_{nn})=D(m_{1n}a_{nn} +a_{nn}m_{1n})=$$$$D(m_{1n})a_{nn}+m_{1n}D(a_{nn})+D(a_{nn})m_{1n} +a_{nn}D(m_{1n}).$$  But, $D(a_{nn})\in T_{1n}+\cdots+T_{nn}$, whence $D(a_{nn})m_{1n} =0$; in a way of similarity, $a_{nn}D(m_{1n})=0$. Consequently, $$D(m_{1n}a_{nn})=D (m_{1n})a_{nn}+m_{1n}D(a_{nn}),$$ ending the argument.
\end{proof}

\begin{lemma}\label{d3} Let $D : T \longrightarrow T$ be a linear map, and assume that $M_{in}$ is faithful left $R_{i}$-module for each $1\leq i \leq n-1$, and $M_{1n}$ is faithfully right $R_1$-module. If $D$ is a Jordan derivation, then $D_{ii}(a_{ii}b_{ii})=D_{i}(a_{ii})b_{ii}+a_{ii}D_{ii}(b_{ii})$, for each $a_{ii},b_{ii}\in R_i$, $1\leq i \leq n$.
\end{lemma}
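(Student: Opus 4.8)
The plan is to reduce the sought Leibniz identity on the diagonal ring $R_i$ to the one-sided Leibniz rules already secured in Lemmas \ref{d1} and \ref{d2}, and then to cancel an auxiliary corner element using the faithfulness hypotheses. I would split the argument according to whether $1\le i\le n-1$ or $i=n$, since the left- and right-faithfulness assumptions enter on opposite sides.

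For $1\le i\le n-1$, fix $a_{ii},b_{ii}\in R_i$ together with an arbitrary $m_{in}\in M_{in}$, and observe that $b_{ii}m_{in}\in M_{in}$. I would compute $D(a_{ii}b_{ii}m_{in})$ in two ways. Applying Lemma \ref{d1} with the single diagonal entry $a_{ii}b_{ii}$ gives $D(a_{ii}b_{ii})m_{in}+(a_{ii}b_{ii})D(m_{in})$, while applying Lemma \ref{d1} twice (peeling off $a_{ii}$, then $b_{ii}$) gives $D(a_{ii})b_{ii}m_{in}+a_{ii}D(b_{ii})m_{in}+a_{ii}b_{ii}D(m_{in})$. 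Equating these and cancelling the common term $a_{ii}b_{ii}D(m_{in})$ yields $D(a_{ii}b_{ii})m_{in}=D(a_{ii})b_{ii}m_{in}+a_{ii}D(b_{ii})m_{in}$. I would then multiply on the left by $E_{ii}$: since right multiplication by the $T_{in}$-entry $m_{in}$ annihilates the $T_{it}$-components ($t>i$) and left multiplication by $E_{ii}$ kills the $T_{ki}$-components ($k<i$), the identity collapses to $\bigl[D_{ii}(a_{ii}b_{ii})-D_{ii}(a_{ii})b_{ii}-a_{ii}D_{ii}(b_{ii})\bigr]m_{in}=0$. The bracketed element lies in $R_i$ and $m_{in}\in M_{in}$ is arbitrary, so faithfulness of $M_{in}$ as a left $R_i$-module forces it to vanish, which is exactly the claim.

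For $i=n$ the same scheme is run on the opposite side. I would fix $a_{nn},b_{nn}\in R_n$ and an arbitrary $m_{1n}\in M_{1n}$, note $m_{1n}a_{nn}\in M_{1n}$, and compute $D(m_{1n}a_{nn}b_{nn})$ in two ways via Lemma \ref{d2}. After cancelling the common summand $D(m_{1n})a_{nn}b_{nn}$ and extracting the relevant component (here left multiplication by $m_{1n}\in T_{1n}$ already annihilates every $T_{kn}$-component of the various $D(\cdot)$ with $k\neq n$, leaving only the diagonal maps $D_{nn}$), I would arrive at $m_{1n}\bigl[D_{nn}(a_{nn}b_{nn})-D_{nn}(a_{nn})b_{nn}-a_{nn}D_{nn}(b_{nn})\bigr]=0$ for every $m_{1n}$, and faithfulness of $M_{1n}$ as a right $R_n$-module completes this case.

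The double applications of Lemmas \ref{d1} and \ref{d2} and the cancellations are routine. The step I expect to demand the most care is the component bookkeeping: one must check that, after the multiplications by $E_{ii}$ (respectively by $m_{1n}$ and $E_{nn}$), every off-diagonal contribution of $D(a_{ii})$, $D(b_{ii})$ and $D(a_{ii}b_{ii})$ is annihilated, so that only the diagonal maps $D_{ii}$ survive. This is precisely where the triangular multiplication rule ($T_{pq}T_{qr}\subseteq T_{pr}$, and zero otherwise) must be invoked, and it is what converts the annihilation identity into the desired Leibniz identity once the faithfulness hypothesis is applied.
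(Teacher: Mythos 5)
Your proposal is correct and follows essentially the same route as the paper's own proof: both cases reduce the diagonal Leibniz identity to a double application of Lemma \ref{d1} (resp.\ Lemma \ref{d2}) on $a_{ii}b_{ii}m_{in}$ (resp.\ $m_{1n}a_{nn}b_{nn}$), cancel the common term, and invoke the faithfulness of $M_{in}$ (resp.\ $M_{1n}$). Your explicit component bookkeeping via multiplication by $E_{ii}$ is a welcome clarification of a step the paper leaves implicit, but it is not a different argument.
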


\begin{proof} Thanks to Lemma \ref{d1}, $D(a_{ii}b_{ii}M_{in})=D(a_{ii}b_{ii})M_{in}+a_{ii}b_{ii}D(M_{in})$, for $1\leq i\leq n-1$. On the other side, again with the aid of Lemma \ref{d1}, we write $$D(a_{ii}b_{ii}M_{in})=
D(a_{ii})b_{ii}M_{in}+a_{ii}D(b_{ii}M_{in})=$$$$D(a_{ii})b_{ii}M_{in}+a_{ii}D(b_{ii})M_{in}+a_{ii}b_{ii}D(M_{in}),$$ for $1\leq i\leq n-1$. So, $$D(a_{ii}b_{ii})M_{in}=D(a_{ii})b_{ii}M_{in}+a_{ii}D(b_{ii})M_{in},$$ $1\leq i\leq n-1$. However, as $M_{in}$ is faithful left $R_{i}$-module, it must be that $D_{ii}(a_{ii}b_{ii})=D_{i}(a_{ii})b_{ii}+a_{ii}D_{ii}(b_{ii})$ for each $a_{ii},b_{ii}\in R_i$, $1\leq i \leq n-1$.\\
Next, assume that $a_{nn},b_{nn}\in R_n$. Employing Lemma \ref{d2}, we know that $$D(M_{1n}a_{nn}b_{nn})=D(M_{1n})a_{nn}b_{nn}+M_{1n}D(a_{nn}b_{nn}).$$ Furthermore, again Lemma \ref{d2} is applicable to extract that $$D(M_{1n}a_{nn}b_{nn})=D(M_{1n}a_{nn})b_{nn}+M_{1n}a_{nn}D(b_{nn})=$$$$D(M_{1n})a_{nn}b_{nn}+M_{1n}a_{nn}D(b_{nn})+M_{1n}a_{nn}D(b_{nn}),$$ so that $$M_{1n}D(a_{nn}b_{nn}=M_{1n}D(a_{nn})b_{nn}+M_{1n}a_{nn}D(b_{nn}).$$. But, since $M_{1n}$ is faithful right $R_n$-module, this leads to $D(a_{nn}b_{nn})= D(a_{nn})b_{nn}+ a_{nn}D(b_{nn}),$ as asked for.
\end{proof}

We now come to our pivotal instrument which automatically assures the validity of the desired principal result quoted below.

\begin{lemma}\label{major} Let $D : T \longrightarrow T$ be a linear map, and assume that $M_{in}$ is faithful left $R_{i}$-module for each $1\leq i \leq n-1$, and $M_{1n}$ is faithfully right $R_1$-module. If $D$ is a Jordan derivation, then $(D(A)B)_{ij}+(AD(B))_{ij}=D_{ij}(AB)$, for each $1\leq i< j \leq n$ and $A,B\in T$.
\end{lemma}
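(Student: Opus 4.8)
The plan is to reduce to homogeneous arguments and then match each resulting configuration to one of the preceding lemmas. By the $R$-linearity of $D$ and the bilinearity of the multiplication, it suffices to prove the identity for single blocks $A=m_{kl}\in T_{kl}$ and $B=m_{pq}\in T_{pq}$. Fixing $i<j$ and inserting the idempotents $E_{ll}$ and $E_{pp}$, and using that $E_{ii}m_{kl}=0$ unless $k=i$ while $m_{pq}E_{jj}=0$ unless $q=j$, the two mixed terms collapse to
\[
(D(A)B)_{ij}=\delta_{qj}\,D_{ip}(m_{kl})\,m_{pq},\qquad (AD(B))_{ij}=\delta_{ik}\,m_{kl}\,D_{lj}(m_{pq}),
\]
while $D_{ij}(AB)=D_{ij}(m_{kl}m_{pq})$ is $0$ unless $l=p$. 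Hence everything reduces to the single-block identity
\[
\delta_{qj}\,D_{ip}(m_{kl})\,m_{pq}+\delta_{ik}\,m_{kl}\,D_{lj}(m_{pq})=D_{ij}(m_{kl}m_{pq}),
\]
which I will settle by a case analysis on the indices.

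First suppose $l\neq p$, so that $AB=0$ and the right-hand side vanishes; I must show the left-hand side cancels. By Lemma \ref{2}, $D_{ip}(m_{kl})\neq0$ only for $p=l,\ i\le k$ or $i=k,\ p>l$, and $D_{lj}(m_{pq})\neq0$ only for $j=q,\ l\le p$ or $l=p,\ j>q$; since $l\neq p$, any surviving term forces $i=k$, $q=j$ and $l<p$. In that case the structural lemmas yield $D_{ip}(m_{il})=m_{il}\,D_{lp}(E_{ll})$ and $D_{lj}(m_{pj})=D_{lp}(E_{pp})\,m_{pj}$, so the left-hand side equals $m_{il}\bigl(D_{lp}(E_{ll})+D_{lp}(E_{pp})\bigr)m_{pj}$, which is $0$ by the relation $D_{lp}(E_{ll})+D_{lp}(E_{pp})=0$. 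This settles the case $AB=0$.

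Next suppose $l=p$, so the product chains as $AB=m_{kl}m_{lq}\in T_{kq}$, and I split according to which factors are diagonal. If both factors are off-diagonal, or only the left one is diagonal ($k=l$), then Lemma \ref{11}—whose hypothesis $i\le j<t$ permits the degenerate index—gives the full Leibniz rule $D(m_{kl}m_{lq})=D(m_{kl})m_{lq}+m_{kl}D(m_{lq})$, and applying $E_{ii}(\,\cdot\,)E_{jj}$ produces the desired block identity for every $i<j$ simultaneously. If only the right factor is diagonal ($l=q$, i.e.\ $B=a_{qq}\in R_q$), then $a_{qq}m_{kq}=0$ and the symmetrization used in Lemma \ref{d2} applies verbatim to give $D(m_{kq}a_{qq})=D(m_{kq})a_{qq}+m_{kq}D(a_{qq})$, after which I again project. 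The one remaining configuration is the purely diagonal one, $k=l=q$ with $A=a_{kk}$ and $B=b_{kk}$ in $R_k$: here $AB$ is diagonal, and for $i<j$ the reduced identity demands $D_{ij}(a_{kk}b_{kk})=D_{ij}(a_{kk})b_{kk}$ when $j=k$, and $D_{ij}(a_{kk}b_{kk})=a_{kk}D_{ij}(b_{kk})$ when $i=k$.

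The main obstacle is precisely this diagonal-diagonal case, since no earlier named lemma computes the off-diagonal blocks of $D$ on a product of two diagonal elements. I plan to overcome it by refining the computation in Lemma \ref{112}: inspecting the three summands of $D(E_{ii}r_iE_{ii})$ shows that the off-diagonal row and column parts of $D$ on a diagonal element are governed by those of the idempotent, namely $D_{ij}(a_{ii})=a_{ii}D_{ij}(E_{ii})$ for $j>i$ and $D_{ij}(c_{jj})=D_{ij}(E_{jj})c_{jj}$ for $i<j$. Feeding these into the two required equalities makes both sides collapse to the common value $a_{kk}b_{kk}\,D_{ij}(E_{kk})$ (respectively $D_{ij}(E_{kk})\,a_{kk}b_{kk}$), so they agree. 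The complementary diagonal block $(i,i)$ is exactly what Lemma \ref{d3} handles, and it is there that the faithfulness of the modules $M_{in}$ and $M_{1n}$ is consumed; this supplies the companion $i=j$ statement that, together with Lemma \ref{major}, will yield the final theorem. Assembling the cases above then completes the proof of the reduced single-block identity, and with it Lemma \ref{major}.
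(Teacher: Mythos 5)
Your proof is correct, but it is organized quite differently from the paper's. The paper proceeds by brute force: it expands $(D(A)B)_{ij}$ and $(AD(B))_{ij}$ as sums over all blocks of $A$ and $B$, converts each summand using Lemmas \ref{12}, \ref{33}, \ref{11} and \ref{d3}, and then regroups everything into $D_{ij}((AB)_{st})$. You instead reduce by bilinearity to a single pair of blocks $A=m_{kl}$, $B=m_{pq}$ and run a case analysis on the indices, which makes the mechanism transparent: when the product vanishes ($l\neq p$) the two cross terms cancel via $D_{lp}(E_{ll})+D_{lp}(E_{pp})=0$, and when it chains ($l=p$) a genuine Leibniz identity applies. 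This buys you two things the paper does not have. First, your route forces you to isolate the identity $D_{ij}(a_{ii})=a_{ii}D_{ij}(E_{ii})$ for $j>i$ (and its column analogue), which you correctly extract from the three-term expansion of $D(E_{ii}r_iE_{ii})$ in Lemma \ref{112}; this identity in fact plugs a soft spot in the paper's own argument, since the paper rewrites $a_{ii}D_{ij}(b_{ii})$ as $D_{ij}(a_{ii}b_{ii})$ by appeal to Lemma \ref{33}, whose proof assumes $t>i$ and therefore does not cover the purely diagonal product. Second, your proof makes visible that the faithfulness hypotheses are never consumed for the off-diagonal blocks $i<j$ — they enter only through Lemma \ref{d3}, i.e., in the diagonal blocks — whereas the paper's proof cites Lemma \ref{d3} and so appears to need them here. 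One presentational caveat: the diagonal refinement is already needed in your $l\neq p$ case whenever one of the two factors is diagonal, not only in the purely diagonal subcase of $l=p$; since you state and prove it in full generality, nothing is lost, but the phrase ``the structural lemmas yield'' in that case silently relies on it.
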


\begin{proof} Assume that $A=\sum_{i=1}^n\sum_{j=1}^n a_{ij} $ and $B=\sum_{i=1}^n\sum_{j=1}^n b_{ij}$, where $a_{ij}=E_{ii}a_{ij}E_{jj}$ and $b_{ij}=E_{ii}b_{ij}E_{jj}$, $1\leq i,j\leq n$. Then, $AB=\sum_{t=1}^n\sum_{k=1}^n (ab)_{tk} $, where $t\leq k$ and $(ab)_{tk}=a_{tt}b_{tk}+a_{tt+1}b_{t+1k}+\cdots+a_{tk}b_{kk}$. Note that $$(D(A)B)_{ij}=E_{ii}D(A)BE_{jj}=E_{ii}D(A)(b_{1j}+b_{2j}+\cdots+b_{jj})E_{jj}.$$ As $E_{ii}D_{ks}(A)=0$ for every $k\neq i$, we have $$(D(A)B)_{ij}= E_{ii}(D_{ii}(A)+D_{ii+1}(A)+\cdots+D_{in}(A))(b_{1j}+b_{2j}+\cdots+b_{jj})E_{jj}.$$ Since $D_{ij}=E_{ii}D_{ij}E_{jj}$, $1\leq i,j\leq n$, we write $$(D(A)B)_{ij}= $$$$D_{ii}(A)b_{ij}E_{jj}+D_{ii+1}(A)b_{i+1j}E_{jj}+\cdots +D_{ij}(A)b_{jj}E_{jj}=D_{ii}( a_{ii})b_{ij}+$$
$$D_{ii+1}(a_{ii}+a_{ii+1}+ a_{i+1i+1} )b_{i+1j} +D_{ii+2}(a_{ii}+a_{ii+1}+ a_{ii+2}+a_{i+1i+2}+a_{i+2i+2} )b_{i+2j}$$$$+\cdots+D_{ij}( a_{ii} +a_{ii+1} +\cdots+a_{ij} + a_{i+1j}  +\cdots+ a_{jj} )b_{jj} .$$ Invoking Lemma \ref{12}, we can claim that $$(D(A)B)_{ij}=D_{ii}(a_{ii})  b_{ij}+D_{ii+1}(a_{ii}+a_{ii+1} )\otimes b_{i+1j}+D_{ii+2}(a_{ii}+a_{ii+1}+a_{ii+2})\otimes b_{i+2j}+\cdots$$$$+D_{ij}(a_{ii}+a_{ii+1}+\cdots+a_{ij} )b_{jj}+D_{ij}(a_{i+1i+1}  b_{i+1j})+D_{ij}(a_{i+1i+2}\otimes b_{i+2j}+a_{i+2i+2} b_{i+2j})$$$$+\cdots +D_{ij}(a_{i+1j}b_{jj}+a_{i+2j}b_{jj}+\cdots+a_{jj}b_{jj}).$$ On the other hand, one verifies that
$$(AD(B))_{ij}=E_{ii}AD(B)E_{jj}=(a_{ii}+a_{ii+1}+\cdots+a_{in})D(B)E_{jj}.$$ Since $D_{st}(B)E_{jj}=0$ for every $t\neq j$, we deduce that $$(AD(B))_{ij}=(a_{ii}+a_{ii+1}+\cdots+a_{in})(D_{1j}(B)+D_{2j}(B)+\cdots+D_{jj}(B))E_{jj}$$$$=a_{ii}D_{ij}(B)+a_{ii+1}\otimes D_{i+1j}(B)+\cdots+a_{ij}\otimes D_{jj}(B).$$ Notice that $D_{ij}(b_{ts})\neq 0$ if we get either $t=i$ or $s=j$ for any $1\leq i\leq j\leq n$ and $1\leq t\leq s\leq n$. It thus follows that
$$(AD(B))_{ij}=a_{ii}D_{ij}(b_{ii}+b_{ii+1}+\cdots+b_{ij}+b_{i+1j}+\cdots+b_{jj})+$$$$a_{ii+1}\otimes D_{i+1j}(b_{i+1i+1}+\cdots+b_{i+1j}+b_{i+2j}+\cdots+b_{jj})$$$$+\cdots+a_{ij-1}\otimes D_{j-1j}(b_{j-1j-1}+b_{jj})+a_{ij}\otimes D_{jj}(b_{jj})$$$$=a_{ii}  D_{ij}(b_{ij})+[a_{ii}  D_{ij}(b_{i+1j})+a_{ii+1}\otimes D_{i+1j}(b_{i+1j})]$$$$+\cdots+[a_{ii}  D_{ij}(b_{jj})+a_{ii+1}\otimes D_{i+1j}(b_{jj})+\cdots+$$$$a_{ij}\otimes D_{jj}(b_{jj})]+a_{ii}[D_{ij}(b_{ii})+D_{ij}(b_{ii+1})+\cdots+D_{ij}(b_{ij-1})]+$$$$a_{ii+1}\otimes[D_{i+1j}(b_{i+1i+1})+\cdots+D_{i+1j}(b_{i+1j-1})]+\cdots+ a_{ij-1}\otimes D_{j-1j}(b_{j-1j-1}).$$ Consulting with Lemma \ref{33} and Lemma \ref{d3}, we may assert that

$$(AD(B))_{ij}=a_{ii}  D_{ij}(b_{ij})+[a_{ii}  D_{ij}(b_{i+1j})+a_{ii+1}\otimes D_{i+1j}(b_{i+1j})]$$$$+[a_{ii}  D_{ij}(b_{i+2j})+a_{ii+1}\otimes D_{i+1j}(b_{i+2j})+a_{ii+2}\otimes D_{i+2j}(b_ {i+2j})]+$$$$\cdots+[a_{ii}  D_{ij}(b_{jj})+a_{ii+1}\otimes D_{i+1j}(b_{jj})+\cdots+$$$$a_{ij}\otimes D_{jj}(b_{jj})]+D_{ij}(a_{ii} (b_{ii}+b_{ii+1}+\cdots+b_{ij-1}))+$$$$D_{ij}(a_{ii+1}\otimes(b_{i+1i+1}+\cdots+b_{i+1j-1}))+\cdots+ D_{ij}(a_{ij-1}\otimes b_{j-1j-1}).$$ So, $$(AD(B))_{ij}=a_{ii}  D_{ij}(b_{ij})+[a_{ii}  D_{ij}(b_{i+1j})+a_{ii+1}\otimes D_{i+1j}(b_{i+1j})]$$$$+[a_{ii}  D_{ij}(b_{i+2j})+a_{ii+1}\otimes D_{i+1j}(b_{i+2j})+a_{ii+2}\otimes D_{i+2j}(b_ {i+2j})]+$$$$+\cdots+[a_{ii}  D_{ij}(b_{jj})+a_{ii+1}\otimes D_{i+1j}(b_{jj})+\cdots+a_{ij}\otimes D_{jj}(b_{jj})]$$$$+D_{ij}(a_{ii}  b_{ii})+D_{ij}(a_{ii}  b_{ii+1}+a_{ii+1}\otimes b_{i+1i+1})+\cdots+$$$$D_{ij}(a_{ii} b_{ij-1}+a_{ii+1}\otimes b_{i+1j-1}+\cdots + a_{ij-1}b_{j-1j-1}).$$
Thus, $$(D(A)B)_{ij}+(AD(B))_{ij}=D_{ii}(a_{ii})b_{ij}+D_{ii+1}(a_{ii}+a_{ii+1})\otimes b_{i+1j}$$$$+D_{ii+2}(a_{ii}+a_{ii+1}+a_{ii+2}+)\otimes b_{i+2j}+\cdots$$$$+D_{ij}(a_{ii}+a_{ii+1}+\cdots+a_{ij})b_{jj}+D_{ij}(a_{i+1i+1}b_{i+1j})$$$$+\cdots +D_{ij}(a_{i+1j}b_{jj}+a_{i+2j}b_{jj}+\cdots+a_{jj}b_{jj})+$$$$a_{ii} D_{ij}(b_{ij})+[a_{ii}  D_{ij}(b_{i+1j})+a_{ii+1}\otimes D_{i+1j}(b_{i+1j})]$$$$[a_{ii}  D_{ij}(b_{i+2j})+a_{ii+1}\otimes D_{i+1j}(b_{i+2j})+a_{ii+2}\otimes D_{i+2j}(b_ {i+2j})]$$$$+\cdots+[a_{ii}  D_{ij}(b_{jj})+a_{ii+1}\otimes D_{i+1j}(b_{jj})+\cdots+a_{ij}\otimes D_{jj}(b_{jj})]$$$$+D_{ij}(a_{ii}  b_{ii})+D_{ij}(a_{ii}  b_{ii+1}+a_{ii+1}\otimes b_{i+1i+1})+\cdots+$$$$D_{ij}(a_{ii}  b_{ij-1}+a_{ii+1}\otimes b_{i+1j-1}+\cdots + a_{ij-1}b_{j-1j-1}).$$ Referring to Lemma \ref{11} and Lemma \ref{d3}, we receive that $$(D(A)B)_{ij}+(AD(B))_{ij}=D_{ij}(a_{ii}b_{ij})+D_{ij}(a_{ii}b_{i+1j}+a_{ii+1}\otimes b_{i+1j})$$$$+D_{ij}(a_{ii} b_{i+2j}+a_{ii+1}\otimes  b_{i+2j} +a_{ii+2}\otimes  b_ {i+2j}) +\cdots+$$$$D_{ij}(a_{ii}b_{jj}+a_{ii+1}b_{jj}+\cdots+a_{ij}b_{jj})+D_{ij}(a_{i+1i+1}b_{i+1j})$$$$+\cdots +D_{ij}(a_{i+1j}b_{jj}+a_{i+2j}b_{jj}+\cdots+a_{jj}b_{jj})+$$$$D_{ij}(a_{ii}  b_{ii})+D_{ij}(a_{ii}  b_{ii+1}+a_{ii+1}b_{i+1i+1})+\cdots+$$$$D_{ij}(a_{ii}  b_{ij-1}+a_{ii+1}\otimes b_{i+1j-1}+\cdots + a_{ij-1}b_{j-1j-1}).$$ Therefore, $$(D(A)B)_{ij}+(AD(B))_{ij}=D_{ij}(a_{ii}b_{ij})+D_{ij}( a_{ii+1}\otimes b_{i+1j})+\cdots+D_{ij}( a_{ij}b_{jj})$$$$+D_{ij}(a_{i+1i+1}b_{i+1j})+D_{ij}(a_{i+1i+2}\otimes b_{i+2j}+a_{i+2i+2}b_{i+2j})+\cdots$$$$ +D_{ij}(a_{i+1j}b_{jj}+a_{i+2j}b_{jj}+\cdots+a_{jj}b_{jj})+$$$$D_{ij}(a_{ii}  b_{ii})+D_{ij}(a_{ii}  b_{ii+1}+a_{ii+1} b_{i+1i+1})+\cdots+$$$$D_{ij}(a_{ii}  b_{ij-1}+a_{ii+1}\otimes b_{i+1j-1}+\cdots + a_{ij-1}b_{j-1j-1})=$$$$D_{ij}(a_{ii}b_{ij}+ a_{ii+1}b_{i+1j}+\cdots+ a_{ij}b_{jj})+D_{ij}(a_{i+1i+1}b_{i+1j})+$$$$D_{ij}(a_{i+1i+2}\otimes b_{i+2j}+a_{i+2i+2}b_{i+2j})+\cdots$$$$ +D_{ij}(a_{i+1j}b_{jj}+a_{i+2j}b_{jj}+\cdots+a_{jj}b_{jj})+$$$$D_{ij}(a_{ii}  b_{ii})+D_{ij}(a_{ii}  b_{ii+1}+a_{ii+1}  b_{i+1i+1})+\cdots+$$$$D_{ij}(a_{ii}  b_{ij-1}+a_{ii+1}\otimes b_{i+1j-1}+\cdots + a_{ij-1}b_{j-1j-1})=$$$$D_{ij}(a_{ii}b_{ij}+ a_{ii+1}b_{i+1j}+\cdots+ a_{ij}b_{jj})+$$$$D_{ij}(a_{i+1i+1}b_{i+1j}+a_{i+1i+2}b_{i+2j}+\cdots+a_{i+1j}b_{jj})+$$$$D_{ij}(a_{i+2i+2}b_{i+2j}+a_{i+2i+3}b_{i+3j}+\cdots+a_{i+2j}b_{jj})+\cdots$$$$ +D_{ij}(a_{jj}b_{jj})+ D_{ij}(a_{ii}  b_{ii})+D_{ij}(a_{ii}  b_{ii+1}+a_{ii+1}  b_{i+1i+1})$$$$+\cdots+D_{ij}(a_{ii}  b_{ij-1}+a_{ii+1}\otimes b_{i+1j-1}+\cdots + a_{ij-1}b_{j-1j-1})=$$$$D_{ij}((AB)_{ij})+D_{ij}((AB)_{i+1j})+\cdots +D_{ij}((AB)_{jj})+D_{ij}((AB)_{ii})+$$$$D_{ij}((AB)_{ii+1})+\cdots+D_{ij}((AB)_{ij-1}).$$ Finally, as $D_{ij}((AB)_{st})=0$, where both $s\neq i$ and $t\neq j$ hold, we conclude $(D(A)B)_{ij}+(AD(B))_{ij}=D_{ij}(AB)$, as wanted.
\end{proof}

The above Lemma~\ref{major} immediately ensures the truthfulness of our major result.

\begin{theorem} Let $D : T \longrightarrow T$ be a linear map, and assume that $M_{in}$ is faithful left $R_{i}$-module for every $1\leq i \leq n-1$, and $M_{1n}$ is faithfully right $R_1$-module. Then, $D$ is a Jordan derivation if, and only if, it is a derivation.
\end{theorem}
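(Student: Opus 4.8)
The plan is to prove both implications of the stated equivalence, noting at once that the direction ``derivation $\Rightarrow$ Jordan derivation'' is trivial: if $D(AB)=D(A)B+AD(B)$ holds for all $A,B\in T$, then specializing $B=A$ yields $D(A^2)=D(A)A+AD(A)$. Hence the entire substance lies in the converse, and I would spend the argument showing that any Jordan derivation $D$ satisfies the full Leibniz rule $D(AB)=D(A)B+AD(B)$ for arbitrary $A,B\in T$.

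The strategy is to verify the Leibniz identity block by block, that is, to establish $D_{ij}(AB)=(D(A)B)_{ij}+(AD(B))_{ij}$ for every index pair $1\le i\le j\le n$, and then to sum over all blocks to recover $D(AB)=D(A)B+AD(B)$. For the strictly upper-triangular blocks, i.e.\ those with $i<j$, this equality is exactly the content of Lemma~\ref{major}, so no further work is required there; this is precisely where the faithfulness hypotheses on the modules $M_{in}$ and $M_{1n}$ have already been absorbed, via the chain culminating in Lemma~\ref{d3}.

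The remaining work is confined to the diagonal blocks $i=j$. Writing $A=\sum a_{ij}$ and $B=\sum b_{ij}$ with $a_{ij},b_{ij}\in T_{ij}$, I would first read off from the triangular block-multiplication rule that $(AB)_{ii}=a_{ii}b_{ii}$. Next, Lemma~\ref{2} confines $D(T_{st})$ to the blocks sharing a row with $s$ or a column with $t$, from which one sees that only a diagonal summand can contribute to a diagonal component of $D$; thus $D(AB)_{ii}=D_{ii}(a_{ii}b_{ii})$, and likewise $D_{ii}(A)=D_{ii}(a_{ii})$ and $D_{ii}(B)=D_{ii}(b_{ii})$. A short block computation, in which the cross terms vanish by triangularity, then gives $(D(A)B)_{ii}=D_{ii}(a_{ii})b_{ii}$ and $(AD(B))_{ii}=a_{ii}D_{ii}(b_{ii})$. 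Finally, Lemma~\ref{d3} supplies exactly $D_{ii}(a_{ii}b_{ii})=D_{ii}(a_{ii})b_{ii}+a_{ii}D_{ii}(b_{ii})$, matching the two sides on the diagonal.

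The main obstacle is not in assembling these pieces, which is routine bookkeeping, but in the fact that the diagonal identity genuinely rests on Lemma~\ref{d3}: this is the sole point at which the faithfulness assumptions govern the $R_i$-action, and without them the equality $D_{ii}(a_{ii}b_{ii})=D_{ii}(a_{ii})b_{ii}+a_{ii}D_{ii}(b_{ii})$ need not persist. Once the off-diagonal blocks (through Lemma~\ref{major}) and the diagonal blocks (through Lemma~\ref{d3} together with the block observations above) are both in hand, adding over all $1\le i\le j\le n$ delivers the Leibniz rule and completes the proof.
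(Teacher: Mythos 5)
Your proposal is correct and follows the paper's intended route: the paper derives the theorem directly from Lemma~\ref{major} (off-diagonal blocks) together with the earlier lemmas, exactly as you do. In fact your write-up is more careful than the paper's one-line deduction, since you explicitly supply the diagonal-block case $(D(A)B)_{ii}+(AD(B))_{ii}=D_{ii}(a_{ii})b_{ii}+a_{ii}D_{ii}(b_{ii})=D_{ii}(a_{ii}b_{ii})=D_{ii}(AB)$ via Lemmas~\ref{1}, \ref{112}, \ref{2} and \ref{d3}, which the paper leaves implicit.
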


\bibliographystyle{amsplain}

\begin{thebibliography}{99}

\bibitem{Benkovic}
D. Benković, \emph{Jordan derivations and antiderivations on triangular matrices}, Linear Algebra Appl., \textbf{397} (2005), 235--244.

\bibitem{birk}
G. F. Birkenmeier, H. E. Heather, J. Y. Kim and J. K. Park, \emph{Triangular matrix representations}, J. Algebra \textbf{230} (2000), 558--595.

\bibitem{Bresar}
M. Brešar, \emph{Jordan derivations on semiprime rings}, Proc. Amer. Math. Soc., \textbf{104} (1988), 1003--1006.

\bibitem{Bresar3}
M. Brešar, \emph{Functional identities and commuting maps}, Algebra and Analysis, \textbf{11} (1999), 65--78.

\bibitem{Bresar2}
M. Brešar, \emph{Commuting maps: a survey}, Taiwanese J. Math., \textbf{8}(3) (2004), 361--397.

\bibitem{B}
M. Brešar, \emph{Jordan maps and zero Lie product determined algebras}, Turk. J. Math. \textbf{46}(5) (2022), 1691--1698.

\bibitem{CH}
W. Cortes and C. Haetinger, \emph{On Jordan generalized higher derivations in rings}, Turk. J. Math. \textbf{29}(1) (2005), 1--11.

\bibitem{EA}
M. El-Soufi and A. Aboubakr, \emph{Generalized derivations on Jordan ideals in prime rings}, Turk. J. Math. \textbf{38}(2) (2014), 233--239.

\bibitem{Ghahramani1}
F. Ghahramani, \emph{Jordan derivations on trivial extension algebras}, Comm. Algebra, \textbf{34} (2006), 3027--3035.

\bibitem{Ghahramani2}
F. Ghahramani, \emph{Generalized derivations in Banach algebras}, Bull. Iranian Math. Soc., \textbf{39} (2013), no. 1, 1--15.

\bibitem{Haghany}
A. Haghany, \emph{Jordan derivations of trivial extensions}, Bull. Aust. Math. Soc., \textbf{80} (2009), 410--417.

\bibitem{Herstein}
I. N. Herstein, \emph{Jordan derivations of prime rings}, Proc. Amer. Math. Soc., \textbf{8} (1957), 1104--1110.

\bibitem{Li}
J. Li, \emph{Jordan derivations and derivations of matrix rings}, Linear Algebra Appl., \textbf{434} (2011), 2060--2070.

\bibitem{Varadarajan}
K. Varadarajan, Formal triangular matrix rings and modules over them: A survey. Math. Student 2007, Special Centenary Volume: (2008) 81-99.

\bibitem{Xiao}
H. Xiao, \emph{Jordan derivations on algebras with idempotents}, Linear Algebra Appl., \textbf{450} (2014), 90--100.

\bibitem{ZhangNest}
X. Zhang, \emph{Jordan derivations of nest algebras are inner}, Acta Math. Sin., \textbf{20} (2004), 677--682.

\bibitem{Zhang}
X. Zhang, \emph{Jordan derivations of triangular algebras}, Linear Algebra Appl., \textbf{429} (2008), 2061--2071.

\bibitem{Yu}
X. Zhang and J. Yu, \emph{Jordan derivations of triangular algebras}, Taiwanese J. Math., \textbf{12}(2) (2008), 407--420.

\end{thebibliography}

\end{document}